\def\blfootnote{\xdef\@thefnmark{}\@footnotetext}
\newtheorem{thm}{Theorem}[section]
\newtheorem{cor}[thm]{Corollary}
\newtheorem{lem}[thm]{Lemma}
\newtheorem{prop}[thm]{Proposition}
\theoremstyle{definition}
\newtheorem{defn}[thm]{Definition}
\newtheorem{question}[thm]{Question}
\theoremstyle{remark}
\newtheorem{rem}[thm]{Remark}
\newtheorem{ex}[thm]{Example}
\begin{document}

\title{Algebraic subgroups of acylindrically hyperbolic groups}
\author{B. Jacobson}
\date{}

\maketitle

\begin{abstract}
A subgroup of a group $G$ is called \textit{algebraic} if it can be expressed as a finite union of solution sets to systems of equations. We prove that a non-elementary subgroup $H$ of an acylindrically hyperbolic group $G$ is algebraic if and only if there exists a finite subgroup $K$ of $G$ such that $C_G(K) \leq H \leq N_G(K)$. We provide some applications of this result to free products, torsion-free relatively hyperbolic groups, and ascending chains of algebraic subgroups in acylindrically hyperbolic groups.
\end{abstract}

\vspace{-2mm} \hspace{3mm} \textbf{MSC Subject Classification:} 20F70, 20F67, 20F65.

\tableofcontents


\section{Introduction}


Let $G$ be a group and $\langle x \rangle$ be the infinite cyclic group generated by $x$. For each $g \in G$, let $\varphi_g$ denote the homomorphism $G \ast \langle x \rangle \rightarrow G$ induced by taking the identity map on $G$ and sending $x \mapsto g$. Given an element $w(x)$ of the free product $G \ast \langle x \rangle$, there is a corresponding function $G \rightarrow G$ whose evaluation at $g \in G$ is $$w(g) : = \varphi_g(w(x)).$$ Building on this notion, we write $w(x)=1$ to represent a \textit{group equation} in the single variable $x$ with coefficients in $G$ whose \textit{solution set} is $$\{ g \in G \, | \, w(g)=1 \}.$$ The solution set above is called the  \textit{primitive solution set corresponding to $w(x)$}.

\begin{defn}
The \textit{Zariski topology} (or \textit{verbal topology} as in \cite{Bryant}) on $G$ is defined by taking the collection of primitive solution sets to be a sub-basis for the closed sets of the topology. That is, each Zariski-closed set of $G$ is of the form $$ \cap_{i \in I} S_i $$ for some indexing set $I$, where for each $i \in I$, the set $S_i$ is a finite union of primitive solution sets corresponding to (single-variable) group equations with coefficients in $G$. Note that in general, the Zariski topology is not a group topology.
\end{defn}

The Zariski topology (and its more general form in \cite{AG1}) is useful in bringing notions from algebraic geometry to the group theoretical setting in order to aid in the study of equations over groups. The topology is also useful in the study of topological groups: Zariski-closed sets are closed in every $T_0$ group topology, and, in the case of countable groups, the Zariski-closed sets are the only such sets \cite{Markov}. For some recent applications to topological groups, see \cite{top} and references therein.

The following definition is motivated by the standard notion of an algebraic subgroup in algebraic geometry.

\begin{defn}
A Zariski-closed subgroup (or more generally, a subset) of $G$ is called \textit{algebraic}.
\end{defn}

\begin{ex}\label{centalg}
For any subgroup $H \leq G$, the centralizer $C_G(H)$ is algebraic, as $$C_G(H)= \cap_{h \in H} \{ x \in G \, | \, [x,h]=1 \}.$$
\end{ex}

In \cite{KM}, Kharlampovich and Myasnikov show that given a torsion-free non-elementary  hyperbolic group $G$, any proper, first-order definable subgroup $H  \leq G$ is cyclic. In the language of \cite{AG1}, $G$ is a domain, so any Zariski-closed subset of $G$ may be expressed as an intersection of primitive solution sets. Furthermore, since $G$ is equationally Noetherian (\cite[Thm. 1.22]{S}), this intersection may be taken to be finite, so algebraic subgroups of $G$ are first-order definable, and in particular, the structural result from \cite{KM} for definable subgroups of $G$ holds for algebraic subgroups. In this paper, we obtain a structural result for algebraic subgroups in the more general case when $G$ is an acylindrically hyperbolic group. Before stating this result, we briefly discuss acylindrically hyperbolic groups.

Let $G$ be group acting on a metric space $(S,d)$. Note that in this paper, all actions of groups on metric spaces are assumed to be isometric.

\begin{defn} \label{acyl}
The action of $G$ on $S$ is called \textit{acylindrical} if for each $\varepsilon > 0$ there exist $R, N >0$ such that for every pair of points $x,y \in S$ with $d(x,y) \geq R$, there are at most $N$ elements $g \in G$ such that \[ d(x,gx) \leq \varepsilon \quad \textrm{and} \quad d(y,gy) \leq \varepsilon \] For example, every geometric (i.e. proper and cobounded) action is acylindrical. However, acylindricity is a much weaker condition. E.g., every action on a bounded space is acylindrical. 
\end{defn}

\begin{defn}\label{defn2}
If $S$ is a hyperbolic space, then the action of $G$ on $S$ is called \textit{elementary} if the limit set of $G$ on the Gromov boundary $\partial S$ contains at most $2$ points.
\end{defn}

A group $G$ is called \textit{acylindrically hyperbolic} if it admits a non-elementary acylindrical action on a hyperbolic space. (More about groups acting acylindrically on hyperbolic spaces, including other equivalent definitions of acylindrically hyperbolic groups, can be found in Subsection \ref{acting acyl}. For a more comprehensive discussion of acylindrically hyperbolic groups, see \cite{Osin}.) The class of acylindrically hyperbolic groups contains many interesting examples, including non-elementary hyperbolic and relatively hyperbolic groups, infinite mapping class groups of punctured closed surfaces, $Out(F_n)$ for $n \geq 2$, directly indecomposable 
non-cyclic right angled Artin groups, most $3$-manifold groups (see \cite[Appendix]{Osin} for details), and groups of deficiency $\geq2$ \cite{OsinL2}.

\begin{defn}\label{non-elem}
Given an acylindrically hyperbolic group $G$, a subgroup $H \leq G$ is called \textit{non-elementary} if for some acylindrical action of $G$ on a hyperbolic space $S$, the action of $H$ on $S$ is non-elementary.
\end{defn}

\begin{ex}
Let $G = H \ast \mathbb{Z}$, where $\mathbb{Z}=\langle z \rangle$ and $H=F(x,y)$ is a free group of rank $2$. Then the action of $G$ on the Bass-Serre tree associated to the free product structure is acylindrical and non-elementary, but the induced action of $H$ is elementary. However, $H$ is a non-elementary subgroup of $G$ since there exists another acylindrical action of $G$ on a hyperbolic space (namely the action of $G$ on its Cayley graph with respect to the generating set $\{x,y,z\}$) such that $H$ acts non-elementarily.
\end{ex}

We are now ready to state the main result.

\begin{thm}\label{main}
Suppose that $G$ is an acylindrically hyperbolic group and that $H \leq G$ is non-elementary. Then $H$ is algebraic if and only if there exists a finite subgroup $K$ of $G$ such that $C_G(K) \leq H \leq N_G(K)$.
\end{thm}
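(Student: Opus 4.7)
For the backward direction, let $K$ be a finite subgroup with $C_G(K)\le H\le N_G(K)$. By Example \ref{centalg}, $C_G(K)$ is algebraic. Because $N_G(K)/C_G(K)$ embeds into the finite group $\mathrm{Aut}(K)$, the subgroup $H$ is a finite disjoint union of cosets $h_iC_G(K)$, and each coset
\[
h_iC_G(K) = \bigcap_{k\in K}\{x\in G:[h_i^{-1}x,k]=1\}
\]
is a finite intersection of primitive solution sets, hence algebraic. A finite union of algebraic sets is algebraic, so $H$ is algebraic.

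For the forward direction, assume $H$ is algebraic and non-elementary. I would take $K:=E_G(H)$, the unique maximal finite subgroup of $G$ normalized by $H$, whose existence is a standard output of the theory of acylindrically hyperbolic groups in \cite{Osin}. Then $H\le N_G(K)$ by construction, so it remains to show $C_G(K)\le H$. Since $H$ is the intersection of all basic Zariski-closed sets containing it, it suffices to prove that whenever $H\subseteq V_{w_1}\cup\cdots\cup V_{w_n}$ with each $V_{w_i}$ a primitive solution set, the union already contains $C_G(K)$.

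The next step is to reduce to a single primitive solution set. I would argue that if each intersection $H\cap V_{w_i}$ were contained in an elementary subgroup of $G$, then $H$ would be covered by finitely many virtually cyclic or finite subgroups, contradicting the fact that a non-elementary $H$ (which contains a non-abelian free subgroup generated by two independent loxodromics) cannot be so covered. Hence some $V_{w_i}$ contains a non-elementary subgroup $L\le H$. The heart of the argument is then the following claim: \emph{if a primitive solution set $V_w$ contains a non-elementary subgroup $L$ of $G$, then $V_w\supseteq C_G(E_G(L))$.} Combined with the standard invariance $E_G(L)=E_G(H)=K$ for non-elementary $L\le H$, this yields $V_{w_i}\supseteq C_G(K)$ and completes the argument.

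The main obstacle is proving the italicized claim. Using the acylindrical action of $G$ on a hyperbolic space, a loxodromic WPD element $\ell\in L$ satisfies $w(\ell^n)=1$ for all $n\in\mathbb{Z}$; writing $w(x)=a_0x^{n_1}a_1\cdots x^{n_k}a_k$, these infinitely many equations force rigid interaction of the coefficients $a_j$ with the axis and limit points of $\ell$. With sufficiently many independent loxodromic directions in $L$, whose fixed-point data jointly characterize $E_G(L)$, one should be able to promote these rigidities to the statement $w(c)=1$ for every $c\in C_G(E_G(L))$. I expect the Dahmani-Guirardel-Osin small-cancellation machinery together with the structural results on elementary subgroups from \cite{Osin} to supply the necessary tools.
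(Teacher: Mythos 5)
Your backward implication is correct and is exactly the paper's argument: $C_G(K)$ is algebraic (Example \ref{centalg}), has finite index in $H$ because $K$ is finite, and finitely many cosets of an algebraic subgroup form an algebraic set (Remark \ref{multhomeom}, Corollary \ref{finindalg}). Taking $K$ to be the maximal finite subgroup of $G$ normalized by $H$ in the forward direction is also the paper's choice.

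The forward implication, however, has a genuine gap at the reduction step. The $V_{w_i}$ are solution sets, not subgroups, so $H\cap V_{w_i}$ is merely a subset of $H$. From the (correct) observation that the $H\cap V_{w_i}$ cannot all lie in elementary subgroups, you may conclude that some $H\cap V_{w_i}$ \emph{generates} a non-elementary subgroup, but not that $V_{w_i}$ \emph{contains} one: solution sets are not closed under products, and there is no reason such an $L$ exists. The paper never makes this reduction. Instead it treats the whole union $\bigcup_{i=1}^{l}V_{p_i}$ at once: it produces $k=l\cdot(\max_i m_i+1)$ pairwise non-commensurable loxodromics $a_1,\dots,a_k\in H$ with $E_G(a_i)=\langle a_i\rangle\times K_G(H)$ (Lemma \ref{hlem}), and pigeonholes suitable powers $a_i^{\gamma}$ so that $m_j+1$ of them lie in a \emph{single} primitive solution set $p_j(x)=1$; that finite supply of well-separated solutions, not a non-elementary subgroup inside one $V_{p_j}$, is what drives the proof. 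Your ``heart of the argument'' is also only a hope as written; the actual mechanism is to hyperbolically embed $\{E_G(a_1),\dots,E_G(a_k)\}$ (Lemma \ref{T3}), use the isolated-component estimate (Lemmas \ref{ngonC} and \ref{tech}) to force, for each of the $m_j+1$ solutions, some coefficient of the equation into the corresponding $E_G(a_i)$, pigeonhole again to land one coefficient in $E_G(a_s)\cap E_G(a_t)=K_G(H)$, and then shorten the equation modulo $C_G(K_G(H))$-equivalence; iterating trivializes the equation. Finally, the ``standard invariance'' $E_G(L)=E_G(H)$ for non-elementary $L\le H$ is false in general: since $K_G(H)$ is normalized by $L$ one only gets $K_G(H)\le K_G(L)$, so the conclusion $V_w\supseteq C_G(K_G(L))$ would be strictly weaker than the needed $V_w\supseteq C_G(K_G(H))$.
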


It should be noted that we actually prove a stronger version of the forward implication of Theorem \ref{main} (see Theorem \ref{strongmain}) which is a result about the Zariski closure of non-elementary subgroups of acylindrically hyperbolic groups. In the course of proving Theorem \ref{main}, we obtain a technical result (Proposition \ref{coboundedness}) which seems to be of independent interest.

If $H$ is an algebraic subgroup of a torsion-free non-elementary hyperbolic group $G$, then either the action of $H$ on the Cayley graph of $G$ is elementary (in which case $H$ is either $\{1\}$ or infinite cyclic) or $K=\{1\}$ so that $H=G$. Thus Theorem \ref{main} recovers the structural result for algebraic subgroups implied by the result of Kharlampovich and Myasnikov in \cite{KM}.

Theorem \ref{main} also yields the following corollaries, which we prove in Section \ref{appl}:

\begin{cor}\label{freecor}
Let $A$ and $B$ be non-trivial groups, and let $H$ be an algebraic subgroup of $A \ast B$. Then at least one of the following holds:
\begin{itemize}
\item[(a)] $H$ is either infinite cyclic or isomorphic to $D_\infty$, the infinite dihedral group.
\item[(b)] $H$ is conjugate to a subgroup of either $A$ or $B$.
\item[(c)] $H=A \ast B$.
\end{itemize}
\end{cor}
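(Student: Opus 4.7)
The plan is to apply Theorem \ref{main} to the action of $G = A \ast B$ on its Bass-Serre tree $T$, whose edge stabilizers are trivial. First I would dispose of the degenerate case $|A|=|B|=2$, in which $G = D_\infty$ is virtually cyclic and hence not acylindrically hyperbolic; here Theorem \ref{main} does not apply, but a direct inspection of the subgroup lattice of $D_\infty$ shows that every subgroup is either trivial or of order $2$ (conjugate into a factor, case (b)), infinite cyclic or a proper infinite dihedral subgroup (case (a)), or all of $G$ (case (c)).

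In the remaining case, $G$ is acylindrically hyperbolic and the action on $T$ is non-elementary and acylindrical. I would split on whether $H$ acts non-elementarily on $T$. If it does, then $H$ is non-elementary in the sense of Definition \ref{non-elem} and Theorem \ref{main} supplies a finite subgroup $K \leq G$ with $C_G(K) \leq H \leq N_G(K)$. The triviality of edge stabilizers is now the key point: any non-trivial finite $K$ has non-empty fixed set in $T$ (by Serre's fixed-point theorem) and this set cannot contain an edge, so it is a single vertex $v$; then $N_G(K)$ must lie in $\mathrm{Stab}_G(v)$, a conjugate of $A$ or $B$, contradicting the non-elementary action of $H \leq N_G(K)$ on $T$. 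Hence $K=\{1\}$ and $H = C_G(\{1\}) = G$, giving case (c).

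If instead $H$ acts elementarily on $T$, then by the standard classification of elementary subgroups for acylindrical actions, $H$ is either finite or contains a loxodromic element. In the finite case, $H$ fixes a vertex and therefore lies in a conjugate of $A$ or $B$ (case (b)). In the loxodromic case, elementarity forces $H$ to preserve the pair of endpoints of the axis of a loxodromic element, hence to lie in the setwise stabilizer of that axis; triviality of edge stabilizers forces this axis stabilizer to be isomorphic to $\mathbb{Z}$ or $D_\infty$, so $H$ is likewise and we land in case (a). The main obstacle I anticipate is nothing deep but rather a clean handling of the dichotomies in each subcase; the substantive content is supplied by Theorem \ref{main} together with elementary Bass-Serre theory for free products.
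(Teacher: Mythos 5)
Your argument is correct in substance and follows the same top-level case division as the paper (apply Theorem \ref{main} to the acylindrical action on the Bass--Serre tree $T$, then split on whether $H$ acts elementarily), but it takes a genuinely different route through the subcases: where the paper invokes the Kurosh subgroup theorem both to classify the virtually cyclic elementary subgroups (as cyclic, $\mathbb{Z}_2 \ast \mathbb{Z}_2 \cong D_\infty$, or conjugate into a factor) and to locate the finite subgroup $F$ inside a conjugate of $A$ or $B$ before showing $H$ fixes the corresponding vertex, you replace Kurosh entirely with Bass--Serre fixed-point and axis arguments: a non-trivial finite $K$ fixes a unique vertex because edge stabilizers are trivial, so $N_G(K)$ lies in a vertex stabilizer and cannot contain a non-elementary $H$, forcing $K=\{1\}$ and $H=G$; and a virtually cyclic $H$ containing a loxodromic preserves its axis and injects into $\mathrm{Isom}(\mathbb{Z})\cong D_\infty$, giving case (a). This is a clean, more geometric alternative, and your explicit contradiction in the non-elementary case is arguably tidier than the paper's conclusion that $H \leq g_i^{-1}Ag_i$ there. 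One statement needs repair: your claim that an elementary subgroup is ``either finite or contains a loxodromic element'' is not what Theorem \ref{actions} says --- the correct dichotomy is \emph{bounded orbits} (elliptic) versus virtually cyclic with a loxodromic, and an elliptic subgroup can be infinite (e.g.\ $H=A$ with $A$ infinite). The fix is immediate: a subgroup with bounded orbits on a tree fixes a point by \cite[Ch.~2, Cor.~2.8]{BH} (the bounded-orbit fixed-point theorem, not just Serre's result for finite groups), hence a vertex, and lands in case (b) exactly as you argue for the finite case. Your separate hand-check of $G=D_\infty$ is fine but not needed in the paper's organization, since there every subgroup acts elementarily on $T$ and is absorbed by the elementary-case analysis.
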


In the case of torsion-free relatively hyperbolic groups, we obtain the following.

\begin{cor}\label{TFRHcor}
Let $G$ be a torsion-free relatively hyperbolic group with peripheral subgroups $\{H_\lambda\}_{\lambda \in \Lambda}$, and let $H \leq G$ be an algebraic subgroup. Then at least one of the following holds:
\begin{itemize}
\item[(a)] $H=G$.
\item[(b)] $H$ is cyclic.
\item[(c)] $H$ is conjugate to a subgroup of some $H_\lambda$.
\end{itemize}
Furthermore, if (c) holds for an abelian $H_\lambda$, then either $H=\{1\}$ or $H$ is conjugate to $H_\lambda$.
\end{cor}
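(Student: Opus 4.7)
The approach is to reduce to Theorem \ref{main} when $H$ is non-elementary, to invoke the standard classification of elementary subgroups of relatively hyperbolic groups when $H$ is elementary, and finally to handle the ``furthermore'' clause by exploiting malnormality of peripheral subgroups.

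First dispose of degenerate cases: if $G$ is virtually cyclic, torsion-freeness forces $G \in \{1, \mathbb{Z}\}$, so $H$ is cyclic and (b) holds; if $G$ coincides with a single peripheral, the statement reduces to understanding algebraic subgroups of $G = H_\lambda$ directly. Otherwise, $G$ is non-elementary relatively hyperbolic, and hence acylindrically hyperbolic via its natural acylindrical action on the coned-off Cayley graph of the pair $(G, \{H_\lambda\})$. If $H$ is non-elementary in the sense of Definition \ref{non-elem}, Theorem \ref{main} produces a finite $K \leq G$ with $C_G(K) \leq H \leq N_G(K)$; torsion-freeness forces $K = \{1\}$, so $H = G$ and (a) holds. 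Otherwise $H$ acts elementarily on every acylindrical action of $G$ on a hyperbolic space, in particular on the coned-off Cayley graph. The standard classification of elementary subgroups in this setting (Osin) places $H$ in one of three types: finite, virtually cyclic containing a loxodromic element, or parabolic (i.e.\ conjugate into some $H_\lambda$). In the torsion-free setting, the first two collapse to trivial or infinite cyclic, giving (b), while the third gives (c).

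For the ``furthermore'' statement, assume $H \leq g H_\lambda g^{-1}$ with $H_\lambda$ abelian and $H \neq \{1\}$; by conjugating we may take $H \leq H_\lambda$. The key structural input is that peripheral subgroups of torsion-free relatively hyperbolic groups are malnormal, which together with abelianness of $H_\lambda$ yields $C_G(h) = H_\lambda$ for every nontrivial $h \in H_\lambda$. In particular $H_\lambda = C_G(h)$ is itself algebraic for any fixed nontrivial $h \in H$, by Example \ref{centalg}. Now fix an expression $H = \bigcap_i \bigl(S_{i,1} \cup \cdots \cup S_{i,n_i}\bigr)$ as an intersection of finite unions of primitive solution sets. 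Since $H$ is torsion-free abelian and nontrivial, it is infinite, so for each $i$ at least one $S_{i,j}$ satisfies $|S_{i,j} \cap H_\lambda| = \infty$.

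The main obstacle, and where I expect the real work to lie, is the following lemma: \emph{any primitive solution set $S \subseteq G$ with $|S \cap H_\lambda| = \infty$ contains all of $H_\lambda$.} To prove it, fix an equation $w(x) = 1$ defining $S$, write $w(x) = g_0 x^{n_1} g_1 \cdots x^{n_k} g_k$, and analyze the evaluation $H_\lambda \ni x \mapsto w(x) \in G$ using normal forms coming from the relatively hyperbolic structure. Malnormality of $H_\lambda$ implies that if any coefficient $g_i$ is genuinely outside $H_\lambda$ and the neighbouring exponents are nonzero, then non-cancellable ``syllables'' persist in $w(x)$ for variable $x \in H_\lambda$, forcing $w(x) = 1$ on only finitely many $x$; and if all relevant coefficients can be absorbed into $H_\lambda$ after cancellation, the equation becomes a linear equation in the torsion-free abelian group $H_\lambda$, whose solution set is either a single coset (finite, if the constant term is nontrivial, hence excluded by infinitude) or all of $H_\lambda$. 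Granting this lemma, each $S_{i,j}$ meeting $H_\lambda$ infinitely contains $H_\lambda$, so $H_\lambda \subseteq \bigcup_j S_{i,j}$ for every $i$, whence $H_\lambda \subseteq H$, giving $H = H_\lambda$ as claimed.
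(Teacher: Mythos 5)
Your treatment of the trichotomy (a)--(c) is essentially the paper's: the acylindrical action on the relative Cayley graph, Theorem \ref{main} plus torsion-freeness forcing $K=\{1\}$ in the non-elementary case, and Theorem \ref{actions} plus the classification of elliptic subgroups as parabolic in the elementary case. The one place you lean on a citation where the paper does work is the claim that an elliptic subgroup is trivial or conjugate into a peripheral: the paper proves this directly from \cite[Thm. 1.14]{ORel} and \cite[Lem. 4.4]{OsinElRel} (a nontrivial $a\in H\cap H_\lambda$ exists, and any $b\in H\setminus H_\lambda$ would make $ba^n$ act with unbounded orbits). That is a standard fact, so this is a stylistic rather than substantive difference.

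The ``furthermore'' clause is where there is a genuine gap. Your key lemma --- a primitive solution set meeting $H_\lambda$ in an infinite set contains all of $H_\lambda$ --- is true and is essentially the paper's Lemma \ref{abelhypemb} combined with Lemma \ref{algabel}, and your deduction of $H=H_\lambda$ from it is correct. But your proposed proof of the lemma, via ``malnormality implies non-cancellable syllables persist, forcing $w(x)=1$ on only finitely many $x$,'' asserts precisely the step that carries all the difficulty, and malnormality alone does not deliver it: malnormality is a purely algebraic condition and gives no quantitative control over how a mixed word $g_0x^{n_1}g_1\cdots$ can collapse to the identity. What the paper actually uses is that $H_\lambda$ is \emph{hyperbolically embedded}, so that the relative metric $\widehat{d}_\lambda$ of Definition \ref{dhat} is proper; one then chooses solutions $x\in H_\lambda$ with $\widehat{d}_\lambda(x^{\beta},1)$ larger than the constant $2C\max\{m_i\}$ from Lemma \ref{ngonC}, and Lemma \ref{tech} (the isolated-component analysis of geodesic polygons in $\Gamma(G,X\sqcup\mathcal H)$) forces some coefficient into $H_\lambda$, after which one absorbs it using commutativity and inducts on the syllable length exactly as in the proof of Theorem \ref{strongmain}, terminating with Lemma \ref{algabel}. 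There is also a bookkeeping point your sketch omits: after absorbing a coefficient the new coefficient $h_1h_2$ must still lie in a controlled finite set (the paper's sets $Y_{j,N}$, arranged to lie in the generating set $X$ via Lemma \ref{finitehypemb}) so that Lemma \ref{tech} remains applicable at the next stage. So the architecture of your argument matches the paper's, but the central finiteness claim needs the hyperbolic-embedding machinery spelled out rather than an appeal to malnormality and normal forms.
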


Important examples of torsion-free relatively hyperbolic groups with abelian peripheral subgroups (the so-called toral relatively hyperbolic groups) include limit groups in the sense of Sela and fundamental groups of hyperbolic knot compliments. Limit groups are hyperbolic relative to their maximal abelian non-cyclic subgroups (see \cite[Thm. 0.3]{Dahmani}), while fundamental groups of hyperbolic knot compliments are hyperbolic relative to free abelian subgroups of rank $2$ (cusp subgroups; see \cite{Farb}).

\begin{cor}\label{ACCcor}
Let $G$ be an acylindrically hyperbolic group and let $H_1 \leq H_2 \leq H_3 \leq \ldots$ be an ascending chain of algebraic subgroups of $G$. Then either
\begin{itemize}
\item[(a)] for each acylindrical action of $G$ on a hyperbolic space $S$, the subgroup $\cup_{i \in \mathbb{N}} H_i$ acts on $S$ with bounded orbits (in particular, each $H_i$ is elliptic), or
\item[(b)] the chain stabilizes.
\end{itemize}
\end{cor}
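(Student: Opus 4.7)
The plan is to prove the contrapositive: assume (a) fails, and deduce (b). So suppose there is an acylindrical action of $G$ on a hyperbolic space $S$ for which $H := \bigcup_{i \in \mathbb{N}} H_i$ has unbounded orbits. The restriction of this action to $H$ is still acylindrical, so the standard trichotomy for acylindrical actions on hyperbolic spaces forces one of two alternatives: either $H$ is virtually cyclic and contains a loxodromic element (the lineal case), or $H$ contains two independent loxodromic elements (the non-elementary case). The elliptic case is ruled out by the unboundedness of orbits.

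In the lineal case, $H$ is virtually cyclic and therefore Noetherian, so the ascending chain $H_1 \leq H_2 \leq \cdots$ inside $H$ stabilizes. In the non-elementary case, two independent loxodromic elements of $H$ already lie in some $H_N$; consequently $H_m \supseteq H_N$ acts non-elementarily on $S$ for every $m \geq N$ and is therefore non-elementary in the sense of Definition \ref{non-elem}. Apply Theorem \ref{main} to each such $H_m$ to obtain a finite $K_m \leq G$ with $C_G(K_m) \leq H_m \leq N_G(K_m)$, and choose $K_m$ maximal among finite subgroups of $G$ normalized by $H_m$. Since $K_{m+1}$ is normalized by $H_{m+1} \supseteq H_m$, maximality yields $K_{m+1} \leq K_m$, and so the descending chain of finite subgroups $K_N \geq K_{N+1} \geq \cdots$ stabilizes at some finite $K$. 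For all sufficiently large $m$ the subgroup $H_m$ then lies in the interval $[C_G(K), N_G(K)]$ of subgroups of $G$; this interval is finite because $N_G(K)/C_G(K)$ embeds in the finite group $\mathrm{Aut}(K)$. Hence the chain $\{H_m\}$ stabilizes.

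The key technical obstacle is ensuring that a non-elementary subgroup of an acylindrically hyperbolic group admits a maximal finite subgroup normalized by it, so that the $K_m$ fit into a coherent descending chain; this is a known structural property in the acylindrically hyperbolic setting and is what lets us thread the purely existential statement of Theorem \ref{main} through the entire chain. A secondary point worth checking is that the passage from ``$H$ contains two independent loxodromics'' to ``$H_N$ contains two independent loxodromics'' (and hence $H_N$ is non-elementary) really goes through, but this is immediate once one observes that the two elements in question are finite data and thus eventually captured by the chain.
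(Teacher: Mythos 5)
Your proof is correct and follows essentially the same route as the paper: in the non-elementary case both arguments take $K_m$ to be $K_G(H_m)$, the unique maximal finite subgroup normalized by $H_m$ (supplied by Lemma \ref{uniquenormalized}, with $C_G(K_G(H_m)) \leq H_m$ coming from Theorem \ref{strongmain}), observe that these form a descending chain of finite subgroups that stabilizes at some $K$, and conclude from the finiteness of $[N_G(K):C_G(K)]$. The only cosmetic difference is that you dispose of the elementary-but-unbounded case via Noetherianity of the virtually cyclic group $\cup_i H_i$, whereas the paper squeezes the $H_i$ between $\langle h \rangle$ and $E_G(h)$; both work.
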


Note that in general, it is actually possible for an acylindrically hyperbolic group to have an ascending chain of (elliptic) algebraic subgroups which does not stabilize. Example \ref{ACCex} provides a construction of such a group.

{\bf Acknowledgments.} I would like to thank Denis Osin for his guidance throughout this project, Sahana Balasubramanya for her careful reading and comments on a draft of this paper, and Michael Hull for a helpful discussion which led to the removal of some non-essential material. I would also like to thank the anonymous referee for a number of useful comments and suggestions.


\section{Preliminaries}


\subsection{Algebraic subgroups}

Let $G$ be a group, and let $\langle x \rangle$ be the infinite cyclic group generated by $x$. Notice that for two distinct elements $u(x), v(x) \in G \ast \langle x \rangle$,  the equations $u(x)=1$ and $v(x)=1$ may have the same solution set. (For example, let $v(x)=w(x)^{-1}u(x)w(x)$ for any $w(x) \in G \ast \langle x \rangle$.) In this case we will regard the equations $u(x)=1$ and $v(x)=1$ as equivalent, as they both define the same subbasic closed set of the Zariski topology.

It is helpful, given a (not necessarily finite) generating set $X$ of $G$, to view an element $w(x) \in G \ast \langle x \rangle$ as a word in the elements of $X \cup \{x\}$ and their inverses. In particular, note that for any cyclic reduction $w'(x)$ of the word $w(x)$, $w(x)=1$ and $w'(x)=1$ are the same equation and that therefore, we may always assume that the word on the left-hand side of an equation is cyclically reduced.

\begin{rem} \label{multhomeom}
Viewing each $w(x) \in G \ast \langle x \rangle$ as a word in $(X \cup \{x \})^{\pm1}$ allows us to observe that for any $g \in G$, the map $G \rightarrow G$ given by left (similarly, right) multiplication by $g$ is a homeomorphism with respect to the Zariski topology. In particular, for each $g \in G$ and each word $w(x) \in G \ast \langle x \rangle$, let $w(g^{-1}x)$ be the word obtained from $w(x)$ by replacing each instance of the letter $x$ with $g^{-1}x$ and each instance of the letter $x^{-1}$ with $(g^{-1}x)^{-1}=x^{-1}g$, and note that if $S$ is the solution set of the equation $w(x)=1$, then $gS$ is the solution set of the equation $w(g^{-1}x)=1$.
\end{rem}

\begin{cor}\label{finindalg}
If a subgroup $H \leq G$ contains a finite index algebraic subgroup $A$, then $H$ is also algebraic.
\end{cor}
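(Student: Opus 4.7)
The plan is to write $H$ as a finite union of left cosets of $A$ and show that each coset is Zariski-closed, from which the conclusion follows because finite unions of closed sets are closed in any topology.

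Specifically, I would first pick coset representatives $h_1,\ldots,h_n \in H$ so that $H = \bigcup_{i=1}^n h_i A$, using that $[H:A] = n < \infty$. Next, I would invoke Remark \ref{multhomeom}: since left multiplication by $h_i$ is a homeomorphism of $G$ in the Zariski topology, and $A$ is Zariski-closed by hypothesis, the translate $h_i A$ is Zariski-closed for each $i$. Concretely, if $A = \bigcap_{j \in J} S_j$ with each $S_j$ a finite union of primitive solution sets $\{g : w_{j,k}(g) = 1\}$, then $h_i A = \bigcap_{j \in J} h_i S_j$, where each $h_i S_j$ is still a finite union of primitive solution sets (obtained by the substitution $w_{j,k}(x) \mapsto w_{j,k}(h_i^{-1} x)$ described in the remark). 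Finally, $H$ is the union of finitely many Zariski-closed sets, hence Zariski-closed, hence algebraic.

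There is essentially no obstacle here: the corollary follows immediately from the fact, recorded in Remark \ref{multhomeom}, that the Zariski topology is invariant under left translation, together with the elementary topological fact that finite unions of closed sets are closed. The only thing to notice is that although the Zariski topology is not in general a group topology, it \emph{is} invariant under the (individual) left and right translations, which is exactly the amount of homogeneity needed to push the closedness of $A$ out to each of its translates $h_i A$.
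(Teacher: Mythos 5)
Your proof is correct and is essentially identical to the paper's: both decompose $H$ into the finitely many left cosets $h_1A,\ldots,h_nA$ and invoke Remark \ref{multhomeom} to see that each translate is Zariski-closed, so their finite union is closed. Your write-up just makes explicit the substitution $w(x)\mapsto w(h_i^{-1}x)$ that the remark already records.
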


\begin{proof}
$H$ is a finite union of cosets $h_1 A, \ldots, h_n A$ ($h_1, \ldots, h_n \in H$), each of which is algebraic by Remark \ref{multhomeom}.
\end{proof}

\subsection{Hyperbolically embedded subgroups} 

Let $G$ be a group, and let $\{ H_{\lambda} \}_{\lambda \in \Lambda}$ be a collection of subgroups of $G$. Given a subset $X \subseteq G$ such that $G$ is generated  by $X \cup (\cup_{\lambda \in \Lambda} H_{\lambda})$, let $\Gamma (G, X \sqcup \mathcal{H})$ denote the Cayley graph of $G$ whose edges are labeled by letters from the alphabet $X \sqcup \mathcal{H}$, where \[ \mathcal{H} = \bigsqcup_{\lambda \in \Lambda} H_{\lambda}. \] It is important to note the use of disjoint unions here. In particular, if $g \in G$ appears multiple times in $X \sqcup \mathcal{H}$, then we include edges in $\Gamma (G, X \sqcup \mathcal{H})$ corresponding to \textit{each} of the distinct copies of $g$. We think of the Cayley graphs $\Gamma (H_{\lambda}, H_{\lambda})$ as complete subgraphs of $\Gamma (G, X \sqcup \mathcal{H})$.

\begin{defn}\label{dhat}
(\cite[Defn. 4.2]{DGO}) For each $\lambda \in \Lambda$, define the relative metric $\widehat{d}_{\lambda}: H_{\lambda} \times H_{\lambda} \rightarrow [0, \infty]$ as follows. A (combinatorial) path $p$ in $\Gamma (G, X \sqcup \mathcal{H})$ is called \textit{$\lambda$-admissible} if it contains no edges of $\Gamma (H_{\lambda}, H_{\lambda})$. Importantly, $p$ may still contain vertices in $\Gamma (H_{\lambda}, H_{\lambda})$ as well as edges labeled by letters from $H_{\lambda}$ (provided those edges are not in $\Gamma (H_{\lambda}, H_{\lambda})$). For each $h,k \in H_{\lambda}$, define $\widehat{d}_{\lambda} (h,k)$ to be the length of a shortest $\lambda$-admissible path in $\Gamma (G, X \sqcup \mathcal{H})$ connecting $h$ to $k$ if such a path exists. Otherwise, let $\widehat{d}_{\lambda} (h,k) = \infty$. It is easy to see that $\widehat{d}_{\lambda}$ satisfies the triangle inequality. The relative metric $\widehat{d}_{\lambda}$ is extended to $G$ by setting \begin{equation}\nonumber \widehat{d}_{\lambda}(f,g) := \left\{ \begin{array}{ll}
       \widehat{d}_{\lambda}(f^{-1}g,1), & \textrm{if} \, f^{-1}g \in H_{\lambda};\\
       \infty, &  \textrm{otherwise.}
     \end{array}
   \right. \end{equation}
\end{defn}

\begin{rem}
The extension of  $\widehat{d}_{\lambda}$ to $G$ is a matter of notational convenience for the statement and use of Lemma \ref{ngonC}.
\end{rem}

\begin{defn}   	
(\cite[Defn. 4.25]{DGO}) Let $G$ be a group and $X \subseteq G$. A collection of subgroups $\{ H_{\lambda} \}_{\lambda \in \Lambda}$ of $G$ is said to be \textit{hyperbolically embedded in $G$ with respect to $X$} (notated $\{ H_{\lambda} \}_{\lambda \in \Lambda} \hookrightarrow_h (G,X)$) if the following conditions hold:
\begin{itemize}
\item[(a)] $G$ is generated  by $X \cup (\cup_{\lambda \in \Lambda} H_{\lambda})$
\item[(b)] The Cayley graph $\Gamma (G, X \sqcup \mathcal{H})$ is hyperbolic.
\item[(c)] For each $\lambda \in \Lambda$ the metric space $(H_{\lambda}, \widehat{d}_{\lambda})$ is proper, i.e., any ball of finite radius contains only finitely many elements.
\end{itemize}
Furthermore, $\{ H_{\lambda} \}_{\lambda \in \Lambda}$ is said to be \textit{hyperbolically embedded} in $G$ (notated $\{ H_{\lambda} \}_{\lambda \in \Lambda} \hookrightarrow_h G$) if there exists $X \subseteq G$ such that $\{ H_{\lambda} \}_{\lambda \in \Lambda} \hookrightarrow_h (G,X)$.
\end{defn}

\begin{lem}\label{finitehypemb}
(\cite[Cor. 4.27]{DGO}) Let $G$ be a group $\{ H_{\lambda} \}_{\lambda \in \Lambda}$ a collection of subgroups of $G$, $X_1, X_2 \subseteq G$ such that $$G = \langle X_1 \cup (\cup_{\lambda \in \Lambda} H_{\lambda}) \rangle =\langle X_2 \cup (\cup_{\lambda \in \Lambda} H_{\lambda}) \rangle$$ and $|X_1 \bigtriangleup X_2| < \infty$. Then $\{ H_{\lambda} \}_{\lambda \in \Lambda} \hookrightarrow_h (G,X_1)$ if and only if $\{ H_{\lambda} \}_{\lambda \in \Lambda} \hookrightarrow_h (G,X_2)$.
\end{lem}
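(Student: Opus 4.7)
The plan is to proceed by induction on $|X_1 \bigtriangleup X_2|$, which reduces the lemma to the case where $X_2$ is obtained from $X_1$ by adjoining a single generator $g \in G$. Using $Y = X_1 \cup X_2$ as an intermediate generating set, which differs from each $X_i$ by a finite set and still generates $G$ together with $\mathcal{H}$, the general biconditional follows from the single-generator case by symmetry.

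In the reduced setting, write $X' = X \cup \{g\}$ and let $\Gamma = \Gamma(G, X \sqcup \mathcal{H})$, $\Gamma' = \Gamma(G, X' \sqcup \mathcal{H})$. Since $X \cup \mathcal{H}$ generates $G$, fix a word $w_g = a_1 \cdots a_\ell$ over $X \sqcup \mathcal{H}$ representing $g$. The identity map on the vertex set $G$ is then a bi-Lipschitz equivalence between $\Gamma$ and $\Gamma'$ (under their path metrics): every $g$-edge in $\Gamma'$ can be replaced by the length-$\ell$ path spelled by $w_g$ in $\Gamma$, while all other edges are common to both graphs. Since hyperbolicity is a quasi-isometry invariant of geodesic metric spaces, condition (b) of the definition of hyperbolic embeddedness transfers between $(G, X)$ and $(G, X')$.

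For condition (c), I would show that the relative metrics $\widehat{d}_\lambda$ (with respect to $X$) and $\widehat{d}'_\lambda$ (with respect to $X'$) are bi-Lipschitz equivalent on $H_\lambda$, which transfers properness. The inequality $\widehat{d}'_\lambda \leq \widehat{d}_\lambda$ is immediate because every $\lambda$-admissible path in $\Gamma$ is also a $\lambda$-admissible path in $\Gamma'$. For the reverse, take a shortest $\lambda$-admissible path $p$ in $\Gamma'$ from $h$ to $k$ in $H_\lambda$ and substitute $w_g$ for each $g$-labeled edge, producing a $\Gamma$-path of length at most $\ell \cdot |p|$.

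The main obstacle is that this substitution need not be $\lambda$-admissible: if some letter $a_i$ of $w_g$ belongs to the $\lambda$-copy of $H_\lambda$ and the intermediate vertex $v a_1 \cdots a_{i-1}$ (starting from the initial vertex $v$ of a replaced $g$-edge) happens to lie in $H_\lambda$, then the inserted $a_i$-edge lies in $\Gamma(H_\lambda, H_\lambda)$. Handling this is the crux. I would choose $w_g$ in reduced form so that the number of $\lambda$-copy letters it contains is bounded by a constant depending only on $g$, and at each bad insertion reroute the offending edge through a short $\lambda$-admissible detour in $\Gamma$ between the same endpoints; such detours exist uniformly since only finitely many bad local configurations can arise per $g$-edge and each can be corrected using fixed auxiliary data for $g$. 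The resulting $\Gamma$-path has length at most $C \cdot |p|$ for a constant $C$ independent of $p$, yielding $\widehat{d}_\lambda \leq C \cdot \widehat{d}'_\lambda$ and thus bi-Lipschitz equivalence of the two relative metrics on $H_\lambda$. Properness of $\widehat{d}_\lambda$ is therefore equivalent to properness of $\widehat{d}'_\lambda$, completing condition (c) and hence the lemma.
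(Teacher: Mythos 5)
The paper offers no proof of this lemma --- it is imported verbatim from \cite[Cor.~4.27]{DGO} --- so I am assessing your argument on its own terms. Your reduction to adjoining a single generator $g$, and your treatment of conditions (a) and (b) via the bi-Lipschitz equivalence of the two Cayley graphs, are fine. The gap is in condition (c): the asserted comparison $\widehat{d}_{\lambda} \leq C\,\widehat{d}'_{\lambda}$ is false, and the ``short $\lambda$-admissible detours'' you invoke need not exist. A detour replacing a bad inserted edge from $u$ to $ua_i$ (with $u \in H_{\lambda}$ and $a_i$ an $H_{\lambda}$-letter of $w_g$) would be a $\lambda$-admissible path in $\Gamma$ between two elements of $H_{\lambda}$, hence would have length at least $\widehat{d}_{\lambda}(u,ua_i)=\widehat{d}_{\lambda}(1,a_i)$ --- and this quantity is very often infinite. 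Concretely, take $G = F(x) \ast \langle t\rangle$, $H_{\lambda}=\langle t\rangle$, $X=\{x\}$, $g=xt$. Here $\widehat{d}_{\lambda}(1,t^k)=\infty$ for all $k\neq 0$ (this is exactly the standard free-product computation that makes $(H_{\lambda},\widehat{d}_{\lambda})$ proper), yet in $\Gamma(G,(X\cup\{g\})\sqcup H_{\lambda})$ the path $1 \to x^{-1} \to x^{-1}g = t$ is $\lambda$-admissible of length $2$, so $\widehat{d}'_{\lambda}(1,t)\leq 2$. Thus no inequality of the form $\widehat{d}_{\lambda}\leq C\,\widehat{d}'_{\lambda}$ can hold, and no $\lambda$-admissible path in $\Gamma$ joins $u$ to $ut$ at all; the finitely-many-local-configurations heuristic cannot repair this, since the obstruction is the nonexistence of the detour, not its length.

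Properness does still transfer (the lemma is true), but it requires a genuinely different mechanism from pointwise comparison of the two metrics. One workable route: given a $\lambda$-admissible path $p$ in $\Gamma'$ of length $n$ from $1$ to $h\in H_{\lambda}$, substitute $w_g$ for each $g$-edge to get a path $q$ in $\Gamma$ of length at most $\ell n$, close $q$ into a polygon with an edge labeled by $h^{-1}\in H_{\lambda}$, and analyse whether that $H_{\lambda}$-component is isolated. If it is, Lemma \ref{ngonC} bounds $\widehat{d}_{\lambda}(1,h)$ linearly in $n$ and properness of $\widehat{d}_{\lambda}$ finishes; if not, $h$ is determined up to boundedly many choices by data read off from $q$ (connections to the boundedly many $H_{\lambda}$-components created by the fixed word $w_g$), and one concludes by a counting argument. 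Either way, the finiteness of balls is transferred without the two relative metrics being comparable.
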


Observe in particular that if $\{ H_{\lambda} \}_{\lambda \in \Lambda}$ is hyperbolically embedded in $G$ with respect to some $X$, then $\{ H_{\lambda} \}_{\lambda \in \Lambda}$ is also hyperbolically embedded in $G$ with respect to any set $X'$ obtained from $X$ by adding finitely many elements of $G$.
   
\begin{defn}
(\cite[Defns. 2.19, 2.20]{ORel}) Let $\{ H_{\lambda} \}_{\lambda \in \Lambda} \hookrightarrow_h (G,X)$ and let $q$ be a path in $\Gamma (G, X \sqcup \mathcal{H})$. A non-trivial subpath $p$ of $q$ is called an \textit{$H_{\lambda}$-subpath} if the label of $p$ is a word in the alphabet $H_{\lambda}$. An $H_{\lambda}$-subpath $p$ of $q$ is called an \textit{$H_{\lambda}$-component} if it is not contained in a longer $H_{\lambda}$-subpath of $q$, and, in the case that $q$ is  loop, $p$ is not contained in a longer $H_{\lambda}$-subpath of any cyclic shift of $q$.

Two $H_{\lambda}$-components $p_1,p_2$ of $q$ are called \textit{connected} if there exists a path $c$ in $\Gamma (G, X \sqcup \mathcal{H})$ that connects some vertex of $p_1$ to some vertext of $p_2$ and the label of $c$ is a word consisting only of letters from $H_{\lambda}$. Algebraically, this means that all vertices of $p_1$ and $p_2$ belong to the same left coset of $H_{\lambda}$. An $H_{\lambda}$-component $p$ of $q$ is called \textit{isolated} in $q$ if it is not connected to any other $H_{\lambda}$-component of $q$.
\end{defn}

For a path $p$ in $\Gamma (G, X \sqcup \mathcal{H})$, let $p_{-}$ and $p_{+}$ denote the initial and terminal vertices of $p$ respectively. The following lemma is a simplification of \cite[Prop. 4.14]{DGO} and is integral to the technique of the main proof:

\begin{lem}\label{ngonC}
Let $\{ H_{\lambda} \}_{\lambda \in \Lambda} \hookrightarrow_h (G,X)$. Then there exists a constant $C>0$ such that for any $n$-gon
$p$ with geodesic sides in $\Gamma (G, X \sqcup \mathcal{H})$, any $\lambda \in \Lambda$, and any isolated $H_{\lambda}$-component $a$ of $p$, we have $\widehat{d}_{\lambda}(a_{-},a_{+}) \leq Cn$.
\end{lem}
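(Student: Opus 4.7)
The plan is to follow the technique of \cite[Prop. 4.14]{DGO}, of which this lemma is essentially the case of a single isolated component (the original proposition bounds the sum $\sum_{i=1}^{k} \widehat{d}_\lambda((a_i)_-,(a_i)_+)$ over any set $\{a_1, \ldots, a_k\}$ of isolated $H_\lambda$-components of the polygon). Set $\Gamma := \Gamma(G, X \sqcup \mathcal{H})$ and let $\delta$ be its hyperbolicity constant, which exists by the definition of hyperbolic embedding. Without loss of generality assume that $a$ is a subpath of $s_1$, and write $s_1 = u \cdot a \cdot v$ so that $a$ is a single edge from $a_-$ to $a_+$ labeled by a letter from $H_\lambda$. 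The goal is to construct an explicit $\lambda$-admissible path from $a_-$ to $a_+$ in $\Gamma$ of Cayley length at most $Cn$, with $C$ independent of $n$ and of the specific polygon.

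The natural candidate to analyze is the reversed complementary path $q := u^{-1} \cdot s_n^{-1} \cdots s_2^{-1} \cdot v^{-1}$, which runs from $a_-$ to $a_+$ and closes the loop together with $a$. A priori the length of $q$ is controlled only by the total length of the sides of $p$, which we do not want to appear in the final bound. Moreover, $q$ may fail to be $\lambda$-admissible because it can traverse edges of $\Gamma(H_\lambda, H_\lambda)$ coming from other $H_\lambda$-components of $p$. For each such offending edge $e$ along $q$, the isolation hypothesis on $a$ forces the endpoints of $e$ to lie in a left coset of $H_\lambda$ distinct from $a_- H_\lambda$. The plan is to replace each $e$ by a short detour in $\Gamma$ that avoids $\Gamma(H_\lambda, H_\lambda)$, and simultaneously to shortcut long portions of $q$ by using $\delta$-thinness of geodesic polygons in $\Gamma$; the detour exists because $\delta$-hyperbolicity forces the various sides of $p$ to be within bounded Cayley-distance of each other, so one can step outside the coset $a_- H_\lambda$ at cost $O(\delta)$.

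The main obstacle is the bookkeeping: a priori the number of offending edges could scale with the total side length of $p$, and ad-hoc detours could reintroduce new edges of $\Gamma(H_\lambda, H_\lambda)$, destroying admissibility. The key point is that $\delta$-hyperbolicity together with the isolation of $a$ forces the relevant edges to organize themselves into at most $O(n)$ effective interactions, each of which can be resolved by a bounded-length detour depending only on $\delta$, giving a total length of $O(n)$. This combinatorial clustering, together with careful tracking of how $H_\lambda$-components can be connected across multiple sides, is the technical heart of \cite[Prop. 4.14]{DGO}, and I would invoke that proposition directly (specialized to the case of a single isolated component) rather than reproduce the full argument.
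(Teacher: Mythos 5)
Your proposal is correct and matches the paper exactly: the paper gives no independent proof of this lemma, stating only that it is ``a simplification of \cite[Prop. 4.14]{DGO}'' (the case of a single isolated component, rather than a sum over a set of them), which is precisely the citation you invoke. Your informal sketch of the detour-and-bookkeeping strategy behind the DGO argument is a reasonable gloss, but since both you and the paper ultimately defer to that proposition, nothing further is required.
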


\subsection{Groups acting on hyperbolic spaces} \label{acting acyl}

\begin{defn}\label{defn1}
Given a group $G$ acting on a hyperbolic space $S$, an element $g \in G$ is called \textit{loxodromic} if the map $\mathbb{Z} \rightarrow S$ given by $n \mapsto g^n s$ is a quasi-isometric embedding for some (equivalently, any) $s \in S$. Each loxodromic element $g \in G$ has exactly two limit points $g^{\pm \infty}$ on the Gromov boundary $\partial S$. A pair of loxodromic elements $g,h \in G$ are called \textit{independent} if the sets $\{g^{\pm \infty} \}$ and $\{h^{\pm \infty} \}$ are disjoint.
\end{defn}

The following definition is due to Bestvina and Fujiwara in \cite{BF}:

\begin{defn}
For a group $G$ acting on a metric space $S$, an element $h \in G$ satisfies the \textit{weak proper discontinuity} condition (or $h$ is a \textit{WPD element}) if for each $\varepsilon > 0$ and each $x \in S$ there exists $M \in \mathbb{N}$ such that \[ | \{ g \in G \, | \, d(x,gx) < \varepsilon, d(h^M x, gh^M x) < \varepsilon \} | < \infty \] Let $\mathcal{L}_{WPD}(G,S)$ denote the set of elements $g \in G$ which are loxodromic WPD with respect to the action of $G$ on $S$. It is easy to see that if $G$ acts acylindrically (as in Definition \ref{acyl}) and $S$ is a hyperbolic space, then every loxodromic element is a WPD element.
\end{defn}

\begin{lem}\label{L1} (\cite[Lem. 6.5, Cor. 6.6]{DGO}) Let $G$ be a group acting on a hyperbolic space $S$. Each element $h \in \mathcal{L}_{WPD}(G,S)$ is contained in a unique maximal virtually cyclic subgroup of $G$ which is denoted by $E_G(h)$. Moreover, for every $g \in G$ the following are equivalent:
\begin{itemize}
\item[(i)] $g \in E_G(h)$.
\item[(ii)] $gh^n g^{-1}= h^{\pm n}$ for some $n \in \mathbb{N}$.
\item[(iii)] $gh^k g^{-1}= h^l$ for some $k,l \in \mathbb{Z} \backslash \{ 0 \}$.
\end{itemize}
\end{lem}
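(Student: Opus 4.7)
The plan is to define $E_G(h)$ as the setwise stabilizer $\mathrm{Stab}_G(\{h^{-\infty},h^{+\infty}\})$ of the pair of limit points of $h$ in $\partial S$. With this definition, the lemma breaks into three claims: (a) $h\in E_G(h)$ and $E_G(h)$ is virtually cyclic, (b) $E_G(h)$ is the unique maximal virtually cyclic subgroup containing $h$, and (c) the equivalences (i)--(iii). Claim (a)'s first part is immediate; the main substance is virtual cyclicity. Throughout, let $E_G^+(h)$ denote the index-at-most-$2$ subgroup of $E_G(h)$ fixing each of $h^{\pm\infty}$ individually.

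To prove $E_G^+(h)$ is virtually cyclic, I would fix a basepoint $s\in S$ on a quasi-axis $L$ of $h$, and choose $\varepsilon$ larger than $2\delta$ plus the Morse (stability) constant for $L$. Any $g\in E_G^+(h)$ preserves $\{h^{\pm\infty}\}$ pointwise, so $gs$ must lie within bounded distance of $L$. A standard progression/synchronization argument on $L$ then shows that there is some $n=n(g)\in\mathbb{Z}$ with $d(s,\,h^{-n}gs)\le\varepsilon$. Applying WPD for $h$ at $s$ (and, to pin down a second approximate commutator condition, at $h^M s$ for a suitable $M$) yields only finitely many elements of $G$ with this displacement behavior; hence each coset $h^n\langle \text{WPD-finite set}\rangle$ contributes boundedly many elements to $E_G^+(h)$, showing $\langle h\rangle$ has finite index in $E_G^+(h)$. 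The main obstacle here is the careful coupling of the near-axis tracking with WPD at two points: the translation in $n$ absorbs the along-axis motion of $g$, and WPD then controls the transverse/finite ambiguity.

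For claim (b), let $V\le G$ be virtually cyclic with $h\in V$. Since $h$ has infinite order, there exists $N\ge1$ such that $\langle h^N\rangle$ is normal in $V$. For any $v\in V$, conjugation gives $vh^Nv^{-1}=h^{\pm N}$, hence $v$ preserves $\{h^{\pm\infty}\}=\{(h^N)^{\pm\infty}\}$, so $v\in E_G(h)$. Thus $V\le E_G(h)$, proving maximality and uniqueness.

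For claim (c), the easy directions are (ii)$\Rightarrow$(iii) (trivial) and (iii)$\Rightarrow$(i): if $gh^kg^{-1}=h^l$ with $k,l\ne 0$, then both sides are loxodromic elements whose limit sets coincide with $\{h^{\pm\infty}\}$, forcing $g\cdot\{h^{\pm\infty}\}=\{h^{\pm\infty}\}$. For (i)$\Rightarrow$(ii), suppose $g\in E_G(h)$. By (a), $E_G(h)$ is virtually cyclic, so some power $\langle h^N\rangle$ is normal in $E_G(h)$; then $gh^Ng^{-1}=h^{\pm N}$, giving (ii). With all three claims in hand, the lemma follows.
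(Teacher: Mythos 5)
The paper does not prove this lemma at all---it is imported verbatim from \cite[Lem. 6.5, Cor. 6.6]{DGO}---and your argument is essentially a correct reconstruction of the proof given in that reference: define $E_G(h)$ as the setwise stabilizer of $\{h^{\pm\infty}\}$, use stability of quasi-geodesics together with the WPD condition applied simultaneously at $s$ and $h^M s$ to bound $[E_G^+(h):\langle h\rangle]$, and deduce maximality and the equivalences from normality of a suitable $\langle h^N\rangle$. The two steps you assert without detail are standard and do hold: for (b), replace a finite-index infinite cyclic subgroup of $V$ by its normal core, inside which $\langle h\rangle\cap\mathrm{core}=\langle h^N\rangle$ is characteristic and hence normal in $V$, giving $vh^Nv^{-1}=h^{\pm N}$; and in the WPD step it matters that elements of $E_G^+(h)$ fix each endpoint individually, so that a single power $h^{-n}$ synchronously returns both $gs$ and $gh^M s$ to within a uniform distance of $s$ and $h^M s$, which is exactly what lets the two-point finiteness condition apply.
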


\begin{cor}
For each $h \in \mathcal{L}_{WPD}(G,S)$, the subgroup $E_G(h)$ is algebraic.
\end{cor}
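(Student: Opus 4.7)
The plan is to exhibit $E_G(h)$ as a finite-index overgroup of an obviously algebraic subgroup, then invoke Corollary \ref{finindalg}. The obviously algebraic subgroup I have in mind is the centralizer $C_G(h)$, which is algebraic by Example \ref{centalg} (it is the primitive solution set of the equation $xhx^{-1}h^{-1}=1$).

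First I would check the inclusions $\langle h \rangle \leq C_G(h) \leq E_G(h)$. The left inclusion is immediate since $h$ commutes with itself. For the right inclusion, any $g \in C_G(h)$ satisfies $gh g^{-1} = h$, which is exactly the condition in Lemma \ref{L1}(iii) with $k=l=1$, so Lemma \ref{L1} forces $g \in E_G(h)$.

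Next I would bound the index $[E_G(h) : C_G(h)]$. Since $h \in \mathcal{L}_{WPD}(G,S)$, the element $h$ has infinite order (it is loxodromic, so $n \mapsto h^n s$ is a quasi-isometric embedding of $\mathbb{Z}$), so $\langle h \rangle$ is an infinite cyclic subgroup of the virtually cyclic group $E_G(h)$ given by Lemma \ref{L1}; therefore $\langle h \rangle$ has finite index in $E_G(h)$. Combined with $\langle h \rangle \leq C_G(h) \leq E_G(h)$, this yields $[E_G(h) : C_G(h)] < \infty$.

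Having produced a finite-index algebraic subgroup $C_G(h)$ of $E_G(h)$, Corollary \ref{finindalg} immediately gives that $E_G(h)$ is algebraic. I do not anticipate a serious obstacle here — each step is either a direct citation (Example \ref{centalg}, Corollary \ref{finindalg}, Lemma \ref{L1}) or a one-line verification — so the proof should be only a few lines.
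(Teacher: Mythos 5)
Your proof is correct and follows essentially the same route as the paper: both rest on the chain $\langle h\rangle \leq C_G(h)\leq E_G(h)$ with $\langle h\rangle$ of finite index in the virtually cyclic group $E_G(h)$, so that $E_G(h)$ is a finite union of cosets of the algebraic subgroup $C_G(h)$. The only cosmetic difference is that the paper writes out those cosets as explicit primitive solution sets $\{x \in G \mid [k_i^{-1}x,h]=1\}$, whereas you delegate that step to Corollary \ref{finindalg}, whose proof is exactly that computation.
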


\begin{proof}
It is not hard to see that $\langle h \rangle$ is a finite index subgroup of $E_G(h)$ so that $E_G(h)$ is a finite union of cosets $k_1 \langle h \rangle, \ldots, k_n \langle h \rangle$ for some $k_1, \ldots, k_n \in E_G(h)$. Noting additionally that $C_G(h) \subseteq E_G(h)$ by (ii) of Lemma \ref{L1}, we have $$E_G(h)=\bigcup_{i=1} ^n k_i \cdot C_G(h)= \bigcup_{i=1} ^n k_i \cdot \{ x \in G \, | \, [x,h]=1 \} = \bigcup_{i=1} ^n \{ x \in G \, | \, [k_i ^{-1} x,h]=1 \}.$$
\end{proof}

\begin{thm}\label{actions} (\cite[Thm. 1.1]{Osin})
Let $G$ be a group acting acylindrically on a hyperbolic space. Then $G$ satisfies exactly one of the following three conditions:
\begin{enumerate}
\item[(a)] G has bounded orbits.
\item[(b)] G is virtually cyclic and contains a loxodromic element.
\item[(c)] G contains infinitely many independent loxodromic elements.
\end{enumerate}
\end{thm}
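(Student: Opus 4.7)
The plan is to analyze $G$ by first classifying individual isometries and then piecing together a group-level trichotomy. The key preliminary is the elliptic/loxodromic dichotomy for single elements under acylindricity: every $g \in G$ is either elliptic (has a bounded orbit) or loxodromic (acts as a quasi-isometric embedding of $\mathbb{Z}$), and there are no parabolic elements. To establish this, suppose $g$ has unbounded orbits yet stable translation length $\tau(g) = \lim_n d(x, g^n x)/n$ equals $0$. Then orbit segments grow sublinearly in their combinatorial length, and a careful use of $\delta$-hyperbolicity of $S$ together with the acylindricity parameters $R, N$ of Definition~\ref{acyl} extracts arbitrarily many distinct powers of $g$ that move a single fixed pair of far-apart reference points by a bounded amount, contradicting the definition of acylindricity.

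Next, suppose $G$ contains no loxodromic element, so every element is elliptic by the preceding step. The goal is to show that $G$ has bounded orbits, giving case (a). Assume for contradiction that there exists a sequence $g_n \in G$ with $d(x, g_n x) \to \infty$. Using hyperbolicity of $S$, a Bowditch-style ping-pong argument combines two elliptic elements whose displacements of $x$ point in sufficiently different directions (so that nested quasi-convex neighborhoods of their fixed-point sets can be separated) into a product, or a suitable power of a product, which is loxodromic. This contradicts the standing assumption, so $G$ must have bounded orbits.

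Otherwise $G$ contains a loxodromic element $h$, and by Lemma~\ref{L1} the maximal virtually cyclic subgroup $E_G(h)$ coincides with the setwise stabilizer of $\{h^{+\infty}, h^{-\infty}\}$. If $G = E_G(h)$, then $G$ is virtually cyclic and contains $h$, placing us in case (b). Otherwise pick $g \in G \setminus E_G(h)$; then $ghg^{-1}$ is loxodromic with limit set $g\{h^{\pm\infty}\} \neq \{h^{\pm\infty}\}$. If these two limit sets are disjoint, $h$ and $ghg^{-1}$ are already independent; if they share exactly one point, a short ping-pong using sufficiently high powers of $h$ and $ghg^{-1}$ produces a pair of independent loxodromics $a, b$. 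The family $\{a^n b a^{-n}\}_{n \geq 1}$ then consists of pairwise independent loxodromic elements, since their limit sets $\{a^n b^{\pm\infty}\}$ are disjoint from $\{b^{\pm\infty}\}$ for $n \neq 0$ (otherwise $b^{\pm\infty}$ would be among the only fixed points $\{a^{\pm\infty}\}$ of powers of $a$, violating independence of $a$ and $b$). This yields case (c). Mutual exclusivity is immediate: (a) rules out loxodromic elements, while (b) forces a single pair of limit points on $\partial S$ and so excludes (c).

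The main obstacle is the second step, producing a loxodromic element from a group of elliptics with unbounded orbits. The product of two elliptic isometries of a hyperbolic space is not automatically loxodromic, and one must choose base points and pass to high powers carefully so that the thin-triangle geometry forces a Bowditch-type ping-pong configuration. Acylindricity is what guarantees such configurations exist, and controlling how the hyperbolicity constant $\delta$ interacts with the acylindricity parameters $R, N$ is the delicate part of the argument.
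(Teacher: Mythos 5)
The paper does not actually prove this statement; it is imported verbatim from \cite[Thm.~1.1]{Osin}, so there is no internal proof to compare against. Judged on its own terms, your outline reproduces the standard architecture of Osin's argument (no parabolics for acylindrical actions; ``all elliptic with unbounded orbits'' is impossible; one loxodromic element yields either a virtually cyclic group or infinitely many independent loxodromics), and steps 1 and 3 are acceptable sketches: the parabolic exclusion is Bowditch's argument, and the $a^nba^{-n}$ construction together with the identification of $E_G(h)$ with the stabilizer of $\{h^{\pm\infty}\}$ (valid here because loxodromics in acylindrical actions are WPD) is routine.

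The genuine gap is exactly the step you flag as the main obstacle, and flagging it does not close it. You must show: an acylindrical action with unbounded orbits in which every element is elliptic cannot exist. Your plan is to find two elliptic elements whose coarse fixed-point sets are far apart and apply a Serre/Bowditch-type product lemma. Two problems. First, the existence of such a pair does \emph{not} follow from unbounded orbits alone: there are (non-acylindrical) actions on hyperbolic spaces, e.g.\ of ascending unions of elliptic subgroups or parabolic-type actions, with unbounded orbits in which every element is elliptic and no product of elliptics is ever loxodromic. So acylindricity must enter at precisely this point, and you never say how it does. Second, $d(x,g_nx)\to\infty$ only tells you that $x$ is far from the quasi-fixed set of $g_n$; it does not separate the quasi-fixed sets of two different $g_n$ from \emph{each other} (these sets can be unbounded and can all accumulate on a common region away from $x$), so the ``sufficiently different directions'' configuration is not produced. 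Osin's actual argument is different in character: it uses the local-to-global criterion that $g$ is loxodromic once $d(x,gx)$ exceeds roughly twice the Gromov product $(gx,g^{-1}x)_x$, assumes no loxodromics to force every large-displacement element to have large Gromov product (i.e.\ to ``backtrack''), and then runs a pigeonhole/counting argument against the bound $N$ in Definition~\ref{acyl} applied to a far-apart pair of orbit points, reaching a contradiction. Until you supply a concrete mechanism of this kind, the central implication of the trichotomy is unproved.
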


\noindent If $G$ has bounded orbits, $G$ is called \textit{elliptic}.

\begin{rem}\label{loxrem}
In light of Definitions \ref{defn2} and \ref{defn1}, Theorem \ref{actions} tells us that if a group $G$ acts acylindrically on a hyperbolic space $S$, then the action is non-elementary if and only if $G$ is not virtually cyclic and contains at least one loxodromic element.
\end{rem}

\begin{thm}\label{AH} (\cite[Thm. 1.2]{Osin})
For any group $G$, the following conditions are equivalent:
\begin{enumerate}
\item[(i)] $G$ admits a non-elementary acylindrical action on a hyperbolic space.
\item[(ii)] There exists a generating set $X$ of $G$ such that the corresponding Cayley graph $\Gamma (G,X)$ is hyperbolic, $|\partial \Gamma (G,X) |>2$, and the natural action of $G$ on $\Gamma (G,X)$ is acylindrical.
\item[(iii)] $G$ is not virtually cyclic and admits an action on a hyperbolic space such that at least one element of $G$ is loxodromic and satisfies the WPD condition.
\item[(iv)] $G$ contains a proper, infinite, hyperbolically embedded subgroup.
\end{enumerate}
\end{thm}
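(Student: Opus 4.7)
The plan is to prove the cycle of implications $(ii) \Rightarrow (i) \Rightarrow (iii) \Rightarrow (iv) \Rightarrow (ii)$, each of which draws on ingredients already developed in the excerpt.

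For $(ii) \Rightarrow (i)$, the natural action of $G$ on $\Gamma(G,X)$ is vertex-transitive, hence cobounded, so the limit set of $G$ on the Gromov boundary $\partial \Gamma(G,X)$ coincides with the full boundary. The hypothesis $|\partial \Gamma(G,X)| > 2$ together with the assumed acylindricity of the action then directly produces a non-elementary acylindrical action on a hyperbolic space in the sense of Definitions \ref{acyl} and \ref{defn2}.

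For $(i) \Rightarrow (iii)$, suppose $G$ acts acylindrically and non-elementarily on a hyperbolic space $S$. A direct comparison of Definition \ref{acyl} with the WPD condition shows that under an acylindrical action every loxodromic element is automatically WPD (as recorded in the text following the definition of WPD). By Theorem \ref{actions} together with Remark \ref{loxrem}, non-elementarity forces $G$ to be not virtually cyclic and to contain a loxodromic element $h$, which must therefore lie in $\mathcal{L}_{WPD}(G,S)$. This gives $(iii)$.

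For $(iii) \Rightarrow (iv)$, given $h \in \mathcal{L}_{WPD}(G,S)$ the main input is the Dahmani--Guirardel--Osin theorem (a companion to Lemma \ref{L1}) that $\{E_G(h)\} \hookrightarrow_h G$, where $E_G(h)$ is the maximal virtually cyclic subgroup of $G$ containing $h$. Since $E_G(h) \supseteq \langle h \rangle$ it is infinite, and since $E_G(h)$ is virtually cyclic while $G$ is not, it is also proper; so $E_G(h)$ is a proper infinite hyperbolically embedded subgroup.

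The principal obstacle is $(iv) \Rightarrow (ii)$, because this is the only implication in which one must manufacture a new action possessing the a priori unavailable uniform quantitative acylindricity property. Given $\{H_\lambda\}_{\lambda \in \Lambda} \hookrightarrow_h (G,X)$ with some proper infinite $H_{\lambda_0}$, the relative Cayley graph $\Gamma(G, X \sqcup \mathcal{H})$ is hyperbolic by definition, but the $G$-action on it is not in general acylindrical because single edges labeled by elements of the $H_\lambda$ can have very large stabilizers in the combinatorial sense. The strategy is to pass to a modified Cayley graph $\Gamma(G, Y)$, where $Y$ is obtained from $X$ by adjoining, for each $\lambda$, a generating set $Y_\lambda$ of $H_\lambda$ chosen so that the inclusion $(H_\lambda, Y_\lambda) \hookrightarrow (H_\lambda, \widehat d_\lambda)$ is Lipschitz onto a proper subspace. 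One then verifies hyperbolicity, acylindricity, and $|\partial \Gamma(G,Y)| > 2$ in turn. The key technical ingredient is Lemma \ref{ngonC}: the $n$-gon estimate controls, via an isolated-component argument, how far elements of $H_\lambda$ can simultaneously displace the endpoints of a long geodesic in $\Gamma(G,Y)$, and combining this bound with properness of $\widehat d_\lambda$ forces only finitely many group elements to witness a given $(\varepsilon, R)$ pair in Definition \ref{acyl}. The condition $|\partial| > 2$ is obtained by a ping-pong argument that produces two independent loxodromic elements, using that $H_{\lambda_0}$ is infinite and properly contained in $G$. The hardest part of the whole theorem is thus the acylindricity verification in this last step; the remaining implications are either essentially formal ($(ii) \Rightarrow (i)$, $(i) \Rightarrow (iii)$) or follow from a single deep result of \cite{DGO} ($(iii) \Rightarrow (iv)$).
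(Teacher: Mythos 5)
First, note that the paper does not prove this statement at all: Theorem~\ref{AH} is quoted verbatim from \cite[Thm.~1.2]{Osin}, so there is no internal proof to compare against. Judged on its own terms, your cycle $(ii)\Rightarrow(i)\Rightarrow(iii)\Rightarrow(iv)$ is correct and is the standard argument: $(ii)\Rightarrow(i)$ is immediate from coboundedness of the action on the Cayley graph, $(i)\Rightarrow(iii)$ follows from Theorem~\ref{actions} and Remark~\ref{loxrem} together with the observation that loxodromics in acylindrical actions are WPD, and $(iii)\Rightarrow(iv)$ is exactly Lemma~\ref{loxhyp} plus the remarks that $E_G(h)\supseteq\langle h\rangle$ is infinite and, being virtually cyclic, proper.

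The gap is in $(iv)\Rightarrow(ii)$. Your proposed construction --- adjoining to $X$ a generating set $Y_\lambda$ of each $H_\lambda$ that is ``Lipschitz onto a proper subspace'' of $(H_\lambda,\widehat d_\lambda)$ --- is not the right move and is not obviously workable: in the relative Cayley graph the \emph{entire} subgroup $H_\lambda$ is already in the alphabet (so $H_\lambda$ has diameter $1$ there), and the obstruction to acylindricity is not cured by thinning the $H_\lambda$-letters. The correct statement is that one enlarges $X$ to some $Y\supseteq X$ so that $\{H_\lambda\}\hookrightarrow_h(G,Y)$ still holds \emph{and} the $G$-action on $\Gamma(G,Y\sqcup\mathcal{H})$ becomes acylindrical; this is precisely \cite[Thm.~5.4]{Osin}, quoted in this paper as Lemma~\ref{5.4}, and its proof is a substantial analysis of geodesics and separating cosets, not a routine combination of Lemma~\ref{ngonC} with properness of $\widehat d_\lambda$ as you suggest. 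Granting that, one takes the generating set $Y\cup(\cup_\lambda H_\lambda)$, whose Cayley graph is $G$-equivariantly quasi-isometric to $\Gamma(G,Y\sqcup\mathcal{H})$ (exactly as in the proof of Proposition~\ref{coboundedness}). For $|\partial\Gamma|>2$ you do not need a ping-pong construction from scratch: Lemma~\ref{6.12} already supplies a loxodromic WPD element $ak$ for any $a\notin H_{\lambda_0}$, and one also needs the (nontrivial but standard) fact that a virtually cyclic group admits no proper infinite hyperbolically embedded subgroup, so that $G$ is not virtually cyclic and Theorem~\ref{actions} upgrades the existence of one loxodromic to non-elementarity of the cobounded action. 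Your plan omits this ``$G$ is not virtually cyclic'' verification entirely, and it is genuinely needed for $(iv)\Rightarrow(ii)$.
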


\begin{defn}
A group $G$ is called \textit{acylindrically hyperbolic} if it satisfies one of the four equivalent conditions above.
\end{defn}


\section{Proof of the main result}


\subsection{Constructing a cobounded action}\label{keysub}

The proof of our main result relies on the existence of arbitrarily large finite collections of loxodromic elements in the non-elementary subgroup $H$ with certain useful properties. Lemma \ref{hlem} guarantees these elements, provided we can find a generating set of $G$ satisfying the requisite assumptions. To find such a generating set, we prove Proposition \ref{coboundedness}, which also seems to be of independent interest.

\begin{prop}\label{coboundedness}
Suppose that a group $G$ acts on a hyperbolic space $S$ and that $H$ is a subgroup of $G$ such that $H$ is not virtually cyclic and $H \cap \mathcal{L}_{WPD}(G,S) \neq \emptyset$. Then there exists a generating set $X$ of $G$ such that the Cayley graph $\Gamma(G, X)$ is hyperbolic, the action of $G$ on $\Gamma(G, X)$ is acylindrical, and the action of $H$ on $\Gamma(G, X)$ is non-elementary.
\end{prop}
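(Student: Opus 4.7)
The plan is to use the loxodromic WPD element $h \in H \cap \mathcal{L}_{WPD}(G,S)$ to construct an acylindrical action of $G$ on a hyperbolic Cayley graph in which $h$ remains loxodromic, and then to apply Theorem \ref{actions} to deduce non-elementarity of the $H$-action.

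First I would pick $h \in H \cap \mathcal{L}_{WPD}(G,S)$ and set $E := E_G(h)$, the unique maximal virtually cyclic subgroup of $G$ containing $h$ (Lemma \ref{L1}). Since $H \leq G$ is not virtually cyclic, $G$ is not virtually cyclic either; and since $h$ is loxodromic WPD, a standard consequence of the results of \cite{DGO} is that $E \hookrightarrow_h G$, so there is some $Y \subseteq G$ with $E \hookrightarrow_h (G,Y)$. Because $E$ is virtually cyclic it admits a finite symmetric generating set $F$; set $X := Y \cup F \subseteq G$. Then $X$ generates $G$ (since $Y \cup E$ does), and by Lemma \ref{finitehypemb} we still have $E \hookrightarrow_h (G,X)$. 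The main technical claim to verify is threefold: $\Gamma(G,X)$ is hyperbolic, the action of $G$ on $\Gamma(G,X)$ is acylindrical, and $h$ acts loxodromically on $\Gamma(G,X)$. Hyperbolicity comes by comparison with the relative Cayley graph $\Gamma(G, X \sqcup E)$, which is hyperbolic by the definition of the hyperbolic embedding. Loxodromicity of $h$ follows from the properness of the relative metric $\widehat{d}$ on $E$: were $\|h^n\|_X$ to stay bounded along some subsequence, an application of Lemma \ref{ngonC} to suitable geodesic polygons in $\Gamma(G, X \sqcup E)$ (with one side the single $E$-edge labeled $h^n$ and another side an $X$-geodesic from $1$ to $h^n$) would place infinitely many distinct powers of $h$ in a bounded $\widehat{d}$-ball, contradicting properness. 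Acylindricity of the $G$-action on $\Gamma(G,X)$ is the most delicate ingredient, and I would establish it by adapting Osin's construction in the proof of the implication (iv) $\Rightarrow$ (ii) of Theorem \ref{AH}.

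With such $X$ in hand, the restriction of the acylindrical $G$-action to the subgroup $H$ is itself acylindrical (acylindricity passes to subgroups immediately from the definition). Since $h \in H$ acts loxodromically on $\Gamma(G,X)$, the subgroup $H$ does not have bounded orbits; and since $H$ is not virtually cyclic, Theorem \ref{actions} forces $H$ to contain infinitely many independent loxodromic elements. In particular, the limit set of $H$ on $\partial \Gamma(G,X)$ contains more than two points, so by Definition \ref{defn2} the action of $H$ on $\Gamma(G,X)$ is non-elementary, as required. I expect the main obstacle to be the simultaneous verification of acylindricity of the $G$-action and loxodromicity of $h$ in $\Gamma(G,X)$; both rest on Lemma \ref{ngonC} combined with the properness of $\widehat{d}$, and the careful bookkeeping of isolated $E$-components in the relevant $n$-gons is where most of the work lies.
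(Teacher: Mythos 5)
There is a genuine gap, and it sits exactly where you flagged ``the main obstacle.'' Your plan is to add only a \emph{finite} generating set $F$ of $E=E_G(h)$ to $Y$, so that $h$ itself stays loxodromic on $\Gamma(G,X)$, and to get hyperbolicity and acylindricity of $\Gamma(G,Y\cup F)$ ``by adapting Osin's construction.'' But Osin's construction (Lemma \ref{5.4}, which is the engine behind (iv)$\Rightarrow$(ii) of Theorem \ref{AH}) produces acylindricity only for the action on the relative Cayley graph $\Gamma(G,Y\sqcup E)$, equivalently on $\Gamma(G,Y\cup E)$ with the \emph{entire} subgroup $E$ added to the generating set. In that graph every $h^n$ lies at distance at most $1$ from the identity, so $h$ is elliptic there --- the very feature your approach is trying to avoid. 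Replacing $E$ by a finite $F$ changes the space in an essential way: the natural map $\Gamma(G,Y\cup F)\to\Gamma(G,(Y\cup F)\sqcup E)$ is Lipschitz but not a quasi-isometry (the orbit of $E$ is bounded in the target and unbounded in the source), so neither hyperbolicity nor acylindricity transfers ``by comparison,'' and Lemma \ref{finitehypemb} only lets you perturb $X$ by finitely many elements, not delete an infinite subgroup from it. What you are really asserting is that every loxodromic WPD element admits an acylindrical action on a hyperbolic Cayley graph in which it remains loxodromic; that is a substantially stronger statement than anything quoted in the paper, and it is not obtainable by the bookkeeping with Lemma \ref{ngonC} that you sketch (that lemma lives in the coned-off graph, where $h$ is elliptic).

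The paper's proof sidesteps this entirely: it accepts that $h$ becomes elliptic in $\Gamma(G,Y\sqcup E_G(h))$ and instead invokes Lemma \ref{6.12} (applied to $E_G(h)\hookrightarrow_h(G,Y)$ and any $a\in H\setminus E_G(h)$, which exists because $H$ is not virtually cyclic) to produce a \emph{new} element $ah^n\in H$ that is loxodromic for the action on $\Gamma(G,Y\sqcup E_G(h))$. Then $X=Y\cup E_G(h)$ works, since $\Gamma(G,X)$ is $G$-equivariantly quasi-isometric to $\Gamma(G,Y\sqcup E_G(h))$, and Remark \ref{loxrem} finishes as in your last paragraph. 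Your closing step (loxodromic element in $H$ plus $H$ not virtually cyclic implies non-elementary via Theorem \ref{actions}) is correct; the missing idea is the substitution of $ah^n$ for $h$, which is what makes the construction compatible with coning off all of $E_G(h)$.
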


To prove Proposition \ref{coboundedness}, we need three other results.

\begin{lem}\label{loxhyp}
(\cite[Cor. 2.9]{DGO}) Let $G$ be a group acting on a hyperbolic space. For each loxodromic WPD element $h \in G$, we have $E_G(h) \hookrightarrow_h G$.
\end{lem}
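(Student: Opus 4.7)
The plan is to construct an explicit subset $X \subseteq G$ witnessing $E_G(h) \hookrightarrow_h (G, X)$. Write $E := E_G(h)$; by Lemma \ref{L1}, $E$ is virtually cyclic and contains $\langle h \rangle$ as a finite-index subgroup. Fix a basepoint $s \in S$. Since $h$ acts loxodromically, the orbit $\langle h \rangle s$ (and hence $Es$, which is at bounded Hausdorff distance from it) is a quasi-geodesic and quasi-convex in $S$.

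First I would define the candidate generating set. Take
\[
X \;:=\; \{\, g \in G \setminus E \,:\, d(s, gs) \leq C \,\}
\]
for a sufficiently large constant $C$. A closest-point-projection argument on the quasi-axis $Es$ shows that any $g \in G$ admits a decomposition as an alternating word in $E$-factors and factors of uniformly bounded displacement, so that $X \cup E$ generates $G$; this takes care of condition (a).

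For conditions (b) and (c) I would compare $\Gamma(G, X \sqcup \mathcal{E})$ (where $\mathcal{E}$ denotes the disjoint copy of $E$) with the ``coned-off'' hyperbolic space $\widehat{S}$ obtained from $S$ by attaching, for every coset $gE$, an auxiliary cone vertex joined to each point of $gEs$. The orbit map $g \mapsto gs$ extends to a $G$-equivariant quasi-isometric embedding $\Gamma(G, X \sqcup \mathcal{E}) \hookrightarrow \widehat{S}$: edges labeled by $X$ become paths of length $\leq C$ in $S$, and an edge labeled by an element of the coset $gE$ maps to a length-$2$ path through the cone point over $gEs$.

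For (b), it then suffices to show $\widehat{S}$ is hyperbolic. Because $h$ is WPD and $Es$ is $E$-invariant and quasi-convex, the family $\{gEs\}_{gE \in G/E}$ behaves like a collection of uniformly quasi-convex ``horocyclic'' subsets: the WPD hypothesis provides the finiteness needed to verify a Bowditch-style ``guessing geodesics'' criterion, so that coning off preserves hyperbolicity. For (c), suppose $e \in E$ satisfies $\widehat{d}(1, e) \leq n$. Then there is an $E$-admissible path of length $\leq n$ in $\Gamma(G, X \sqcup \mathcal{E})$ from $1$ to $e$; each step either displaces the basepoint by at most $C$ (if labeled by $X$) or jumps within some coset $gE$ with $g \notin E$. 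Iterating this control and applying the WPD condition at a large power $h^M$, which simultaneously constrains $d(s, g s)$ and $d(h^M s, g h^M s)$ for $g \in E$, bounds the number of such $e$.

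The main obstacle is proving hyperbolicity of the coned-off space $\widehat{S}$, since everything else is bookkeeping around that result. The technical heart is converting the WPD condition for $h$ into the uniform finiteness condition that lets a guessing-geodesics or thin-triangles argument go through; this is where the distinction between arbitrary loxodromic elements and loxodromic WPD elements becomes essential (the former do not in general generate a hyperbolically embedded virtually cyclic subgroup).
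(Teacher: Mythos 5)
The paper offers no proof of this lemma at all: it is imported verbatim from \cite{DGO} (Cor.\ 2.9 there, proved as part of Thm.\ 6.8 via a quasiconvexity-plus-geometric-separation criterion for hyperbolic embeddings). Your sketch is in the right spirit, but it has a gap at the very first step which the rest of the argument inherits. The set $X=\{g\in G\setminus E: d(s,gs)\le C\}$ need not generate $G$ together with $E$ for \emph{any} constant $C$, because the action of $G$ on $S$ is not assumed to be cobounded and the orbit $Gs$ need not be coarsely connected. Concretely, let $G=F(h,b_1,b_2,\dots)$ act on the tree obtained from its Cayley graph by assigning length $i$ to the edges labelled $b_i$: the action is proper, so $h$ is loxodromic WPD with $E_G(h)=\langle h\rangle$, yet every $g$ with $d(s,gs)\le C$ lies in $\langle h,b_1,\dots,b_{\lfloor C\rfloor}\rangle$, so $\langle X\cup E\rangle\ne G$ for every $C$ (the lemma is nonetheless true here, witnessed by the unbounded-displacement set $\{b_1,b_2,\dots\}$). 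The closest-point-projection argument cannot repair this: projecting $gs$ to the quasi-axis $Es$ says nothing about factoring $g$ as an alternating product of $E$-letters and bounded-displacement letters, since a geodesic of $S$ between orbit points need not pass near any other orbit point. The same missing coboundedness undermines the claim that $g\mapsto gs$ is a quasi-isometric \emph{embedding} of $\Gamma(G,X\sqcup\mathcal{E})$ into $\widehat{S}$ (the lower distance bound requires approximating paths in $\widehat{S}$ by group elements), and a merely Lipschitz map into a hyperbolic space gives no information about hyperbolicity of the source.

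Second, you place the WPD hypothesis at the wrong step, and the step where it is genuinely needed is the one you leave as an assertion. Hyperbolicity of the coned-off space does not use WPD: since $h$ is loxodromic, the Morse lemma makes the translates $gEs$ uniformly quasiconvex, and coning off a uniformly quasiconvex family in a hyperbolic space yields a hyperbolic space by a standard guessing-geodesics argument. What WPD actually buys is \emph{geometric separation} of distinct translates $gEs\ne g'Es$ (uniformly bounded coarse intersections), and that is precisely what is needed for condition (c), properness of $(E,\widehat{d})$. Your argument for (c) does not go through as stated: an $E$-admissible path of length $n$ from $1$ to $e$ may use edges inside foreign cosets $g'E$ with $g'\notin E$, and such edges displace the basepoint arbitrarily far, so ``iterating this control'' bounds nothing. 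One must first show that the portion of such a path travelling in a foreign coset stays a bounded distance from the axis $Es$ for only a bounded time, and only then can the WPD condition for a high power $h^M$ (note that $d(s,es)$ is unbounded for $e\in E$, so the relevant two points must be taken far apart on the axis) be used to bound the number of such $e$. Converting WPD into that separation statement is the technical heart of the proof in \cite{DGO}, and your sketch does not supply it.
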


\begin{lem}\label{5.4}
(\cite[Thm. 5.4]{Osin}) Let $G$ be a group, $\{H_{\lambda} \}_{\lambda \in \Lambda}$ a finite collection of subgroups of $G$, $X \subseteq G$, $\mathcal{H}= \bigsqcup_{\lambda \in \Lambda} H_\lambda$. Suppose that $\{H_{\lambda} \}_{\lambda \in \Lambda} \hookrightarrow_h (G,X)$. Then there exists $Y \subseteq G$ such that $X \subseteq Y$ and the following conditions hold:
\begin{itemize}
\item[(a)] $\{H_{\lambda} \}_{\lambda \in \Lambda} \hookrightarrow_h (G,Y)$. In particular, the Cayley graph $\Gamma(G,Y \sqcup \mathcal{H})$ is hyperbolic.
\item[(b)] The action of $G$ on $\Gamma(G,Y \sqcup \mathcal{H})$ is acylindrical.
\end{itemize}
\end{lem}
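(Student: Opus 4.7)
The plan is to construct $Y$ by adjoining to $X$ a judiciously chosen (possibly infinite) collection of group elements that absorb every potential violation of acylindricity, then to check that this enlargement preserves both hyperbolicity of the Cayley graph and properness of the relative metrics. The starting data are that $\Gamma := \Gamma(G, X \sqcup \mathcal{H})$ is $\delta$-hyperbolic for some $\delta$, each $(H_\lambda, \widehat{d}_\lambda)$ is proper, and Lemma \ref{ngonC} applies with some constant $C$.

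My first step would be to analyze the structure of a potential acylindricity failure. Fix $\varepsilon > 0$ and suppose that for arbitrarily large $R$ there exist vertices $x, y \in \Gamma$ with $d(x,y) \geq R$ and more than $N$ elements $g \in G$ satisfying $d(x, gx), d(y, gy) \leq \varepsilon$. For each such $g$, form the geodesic quadrilateral with vertices $x, y, gy, gx$; its two short sides have length at most $\varepsilon$, so by thinness, a long central segment of $[x, y]$ tracks a translate along $[gx, gy]$ within some $K(\delta, \varepsilon)$-neighborhood. Applying Lemma \ref{ngonC} to this quadrilateral, every isolated $H_\lambda$-component on these long sides has $\widehat{d}_\lambda$-length at most $4C$; by properness of $\widehat{d}_\lambda$, the label of each such component lies in a finite set $F_\lambda(\varepsilon) \subseteq H_\lambda$. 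A pigeonhole on the decoration of $[x, y]$ then shows that any sufficiently large family of such $g$'s forces two distinct $g_1, g_2$ to agree on their long central decomposition, so that $g_1^{-1} g_2$ can be represented by a word in $X \sqcup \mathcal{H}$ of length at most $L(\varepsilon, \delta, C)$ whose $H_\lambda$-syllables lie in the finite sets $F_\lambda(\varepsilon)$.

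Based on this analysis, I would define $Y := X \cup \bigcup_{n=1}^{\infty} B(n)$, where $B(n)$ is the set of all $g \in G$ representable by a word of length at most $L(n, \delta, C)$ in $X \sqcup \mathcal{H}$ whose $H_\lambda$-syllables all lie in $F_\lambda(n)$. Part (a) would then follow from the observation that every new generator $y \in Y \setminus X$ expands into a path in $\Gamma$ of uniformly bounded length, so the identity map on $G$ induces a quasi-isometry between $\Gamma$ and $\Gamma(G, Y \sqcup \mathcal{H})$, preserving hyperbolicity; and the new relative metric $\widehat{d}'_\lambda$ satisfies $\widehat{d}'_\lambda \leq \widehat{d}_\lambda$ with a uniform multiplicative converse on bounded subsets, preserving properness of $(H_\lambda, \widehat{d}'_\lambda)$. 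For part (b), fix $\varepsilon' > 0$ and apply the first step's analysis inside the new Cayley graph: any putative infinite supply of elements $g_1, g_2, \ldots$ witnessing an $\varepsilon'$-failure of acylindricity produces ratios $g_i^{-1} g_j \in Y$ realized as single edges of $\Gamma(G, Y \sqcup \mathcal{H})$, contradicting the finite valence in an $\varepsilon'$-ball around $x$.

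The main obstacle will be the quantitative ``bounded-form'' claim at the end of the first step: extracting, from an arbitrary element $g$ that nearly stabilizes a pair of far-apart vertices, a uniformly bounded combinatorial description. The subtlety is that $H_\lambda$-components along $[x, y]$ may be connected to components along $[gx, gy]$, in which case Lemma \ref{ngonC} does not directly bound their $\widehat{d}_\lambda$-length; one must first perform a merging and rewriting procedure on the word for $g$ so that all surviving components are isolated. Equally delicate is the verification that the potentially infinite enlargement $Y$ does not secretly destroy hyperbolic embedding — controlling the change of relative metrics $\widehat{d}_\lambda \mapsto \widehat{d}'_\lambda$ forces the choice of $B(n)$ to be made through uniformly bounded $\Gamma$-representatives rather than through some more abstract characterization.
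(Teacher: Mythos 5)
This lemma is not proved in the paper: it is quoted verbatim from \cite[Thm.~5.4]{Osin}, whose proof occupies several pages of delicate combinatorics, so your attempt has to be judged as a self-contained proof of that theorem. As such it has at least two gaps that I do not see how to repair within the outline you give. First, your set $Y = X \cup \bigcup_{n=1}^{\infty} B(n)$ consists of elements whose $d_{X \sqcup \mathcal{H}}$-length is bounded only by $L(n,\delta,C)$, which grows with $n$; the union is therefore an unbounded subset of $\Gamma(G, X \sqcup \mathcal{H})$, and the justification you offer for (a) --- that every new generator expands into a path of uniformly bounded length, so the identity induces a quasi-isometry --- is simply false for this $Y$. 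Adjoining an unbounded set of generators is precisely the operation that can destroy hyperbolicity and, more seriously, the properness of the relative metrics $\widehat{d}_\lambda$ (note that Lemma~\ref{finitehypemb} only licenses \emph{finite} changes to $X$); controlling this is the actual content of Osin's theorem, and your closing paragraph acknowledges the issue without resolving it.

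Second, the finiteness conclusions at both ends of your argument rest on counting that is not available. In the first step, the ``pigeonhole on the decoration of $[x,y]$'' quantifies over labelled geodesics of length at least $R$ whose $X$-letters range over a possibly infinite alphabet and whose number of possible decorations grows with $R$; nothing bounds this independently of $x$ and $y$, and even granting that any two witnesses satisfy $g_1^{-1}g_2 \in B$ for a set $B$ of elements of bounded combinatorial description, this only places all witnesses in a single translate $g_1 B$, which is infinite unless $B$ itself is finite. In part (b) you then derive the contradiction from ``the finite valence in an $\varepsilon'$-ball around $x$'', but $\Gamma(G, Y \sqcup \mathcal{H})$ is not locally finite: $\mathcal{H}$ (and, in your construction, $Y$) is infinite, so balls contain infinitely many vertices, and knowing $g_i^{-1}g_j \in Y$ for all $i,j$ yields no contradiction. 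The correct mechanism for finiteness in this setting is the properness of $(H_\lambda, \widehat{d}_\lambda)$ routed through isolated components as in Lemma~\ref{ngonC}, not local finiteness of the Cayley graph. Since the statement is used in this paper only as a black box, the appropriate ``proof'' here is the citation.
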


\begin{lem}\label{6.12}
(\cite[Cor. 6.12]{DGO}) Let $G$ be a group, $X \subseteq G$, $\ H \hookrightarrow_h (G,X)$ a proper, infinite subgroup. Then for every $a \in G \backslash H$, there exists $k \in H$ such that $ak$ is loxodromic and satisfies WPD with respect to the action of $G$ on $\Gamma(G,X \sqcup H)$.

If, in addition, $H$ is finitely generated and contains an element $h$ of infinite order, then we can choose $k$ to be a power of $h$.
\end{lem}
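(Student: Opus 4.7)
The plan is to follow the standard DGO approach, whose workhorse is Lemma \ref{ngonC}: in any geodesic $n$-gon in $\Gamma(G, X \sqcup H)$, every isolated $H$-component $a$ satisfies $\widehat{d}(a_-, a_+) \leq Cn$. Because $H \hookrightarrow_h (G,X)$, the relative metric $\widehat{d}$ on $H$ is proper, so every finite $\widehat{d}$-ball in $H$ is finite. Since $H$ is infinite, there exist $k \in H$ with $\widehat{d}(1,k)$ as large as we like. The whole proof rests on choosing such a $k$ to beat a threshold depending only on $C$ and on $|a|_{X \sqcup H}$.

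For loxodromicity of $ak$, I would fix $k \in H$ with $\widehat{d}(1,k) > D$, where $D$ will be chosen large enough, and consider the path $p_n$ from $1$ labeled $(ak)^n$. Its $n$ edges labeled $k$ are $H$-components of $p_n$. Suppose for contradiction that $|(ak)^n|_{X \sqcup H} \leq \eta n$ for arbitrarily small $\eta$ and large $n$. Closing $p_n$ by a geodesic $q_n$ from $1$ to $(ak)^n$ yields a loop; two $k$-components of $p_n$ are connected only when their endpoints lie in a common left coset of $H$, and the rigid cyclic structure $akak\cdots ak$ lets me bound the number of such connected pairs by a universal constant. Cutting the remaining $k$-components along with the short side $q_n$ into geodesic polygons of controlled size makes them isolated, so Lemma \ref{ngonC} forces $\widehat{d}(1,k) \leq C \cdot(\text{const})$, contradicting the choice of $D$. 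Hence $|(ak)^n|_{X \sqcup H}$ grows linearly in $n$ and $ak$ is loxodromic.

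For WPD of $ak$, given $\varepsilon > 0$ and (after translating, basepoint) $x = 1$, take $M$ large. For any $g \in G$ with $|g|_{X \sqcup H} \leq \varepsilon$ and $|(ak)^{-M} g (ak)^M|_{X \sqcup H} \leq \varepsilon$, form a geodesic quadrilateral with vertices $1, \, g, \, g(ak)^M, \, (ak)^M$ whose two long sides run along subwords of $(ak)^M$ and whose two short sides have length at most $\varepsilon$. As in the loxodromic step, the $k$-components along the long sides are either pairwise connected or isolated; Lemma \ref{ngonC} together with the large value of $\widehat{d}(1,k)$ prevents isolation beyond a fixed count, so $g$ is forced to induce a connection pattern drawn from a finite list, and each pattern pins $g$ to finitely many cosets. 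This is the main technical obstacle: the bookkeeping that turns ``each $k$-component must be matched with a $k$-component on the opposite side'' into the statement that only finitely many $g$ satisfy both displacement inequalities.

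Finally, for the strengthening: if $H$ is finitely generated and $h \in H$ has infinite order, then $\{h^n : n \in \mathbb{Z}\}$ is an infinite subset of $H$, while each $\widehat{d}$-ball in $H$ is finite by properness. Therefore $\widehat{d}(1, h^n) \to \infty$ as $|n| \to \infty$, so for all sufficiently large $n$ the element $k = h^n$ clears the threshold $D$ required above, and $ak = ah^n$ is loxodromic and WPD with respect to the action on $\Gamma(G, X \sqcup H)$.
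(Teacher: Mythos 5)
First, a point of comparison: the paper does not prove this statement at all --- it is quoted verbatim from Dahmani--Guirardel--Osin \cite[Cor. 6.12]{DGO} and used as a black box. So there is no internal proof to measure your attempt against; what follows is an assessment of your sketch on its own terms. Your overall strategy (pick $k\in H$ with $\widehat{d}(1,k)$ huge, which exists by properness of $\widehat{d}$ and infiniteness of $H$, then run the isolated-component Lemma \ref{ngonC} on polygons built from the path labelled $(ak)^n$) is indeed the standard DGO mechanism, and your final paragraph reducing the ``$k$ a power of $h$'' refinement to the main clause is correct.

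The genuine gap is the sentence claiming that ``the rigid cyclic structure $akak\cdots ak$ lets me bound the number of connected pairs by a universal constant.'' Compute when the $i$-th and $j$-th $k$-components of the path labelled $(ak)^n$ are connected: their vertices lie in the cosets $(ak)^{i-1}aH$ and $(ak)^{j-1}aH$, so they are connected if and only if $a^{-1}(ak)^{j-i}a=(ka)^{j-i}\in H$. For $j-i=1$ this forces $a\in H$ and so cannot happen, but for $m=j-i\geq 2$ nothing you have said excludes $(ka)^{m}\in H$; if it holds, then the components $i,\,i+m,\,i+2m,\dots$ are all pairwise connected, giving on the order of $n^{2}/m$ connected pairs --- no universal constant. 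Worse, this is not a bookkeeping nuisance but precisely the failure mode of the conclusion: if $(ka)^{m}=h'\in H$ then $(ak)^{mj}=a(ka)^{mj}a^{-1}=a(h')^{j}a^{-1}$ has $|(ak)^{mj}|_{X\sqcup H}\leq 2|a|_{X\sqcup H}+1$ for all $j$, so $ak$ is elliptic, not loxodromic. Thus the heart of the proof is exactly to show that $k$ can be chosen (and, in the refined statement, chosen among powers of $h$) so that $(ka)^{m}\notin H$ for all $m\geq 1$, and then to upgrade ``no connected components'' to a genuine quasi-geodesity and WPD statement; an argument in the spirit of Lemma \ref{tech} only bounds $m$ from below by $\widehat{d}(1,k)/(2C)$ rather than excluding it, because the auxiliary component $h'^{-1}k$ created when you close up the relation $(ka)^{m}h'^{-1}=1$ need not have large relative length. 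You also explicitly leave the WPD finiteness count as an acknowledged ``main technical obstacle,'' so that half of the statement is not proved either. Both gaps are exactly the content of the several-page argument in \cite{DGO} (their Theorem 6.11 and its consequences) that this lemma is imported to avoid reproducing.
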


We are now ready to prove Proposition \ref{coboundedness}.

\begin{proof}[Proof of Proposition \ref{coboundedness}]
Let $h \in H \cap \mathcal{L}_{WPD}(G,S)$. By Lemma \ref{loxhyp} there exists $Z \subseteq G$ such that $E_G(h) \hookrightarrow_h (G,Z)$. By Lemma \ref{5.4} there exists $Y \subseteq G$ such that $Z \subseteq Y$ and
\begin{itemize}
\item[(a)] $E_G(h) \hookrightarrow_h (G,Y)$. In particular, the Cayley graph $\Gamma(G, Y\sqcup E_G(h))$ is hyperbolic.
\item[(b)] The action of $G$ on $\Gamma(G, Y\sqcup E_G(h))$ is acylindrical.
\end{itemize}
Now $E_G(h)$ is clearly infinite (since $h$ is of infinite order), and $H \backslash E_G(h) \neq \emptyset$ (since $H$ is not virtually cyclic). Furthermore, since $E_G(h)$ is virtually cyclic by definition, it is finitely generated. Therefore, we may apply Lemma \ref{6.12} to $E_G(h) \hookrightarrow_h (G,Y)$ and any element \linebreak $a \in H \backslash E_G(h)$ to conclude that there exists $n \in \mathbb{Z}$ such that $ah^n$ is loxodromic with respect to the action of $G$ on $\Gamma(G, Y\sqcup E_G(h))$. Then since $a, h \in H$, we have that $ah^n \in H$, so $H$ contains an element which is loxodromic with respect to the action of $G$ on $\Gamma(G, Y\sqcup E_G(h))$. By Remark \ref{loxrem}, the fact that $H$ is not virtually cyclic and contains a loxodromic element with respect to the action of $G$ on $\Gamma(G, Y\sqcup E_G(h))$ implies that the action of $H$ on $\Gamma(G, Y\sqcup E_G(h))$ is non-elementary.

Set $X = Y \cup E_G(h)$ and observe that the hyperbolicity of the Cayley graph $\Gamma(G,X)$, the acylindricity of the the action of $G$ on $\Gamma(G,X)$, and the fact that the action of $H$ on $\Gamma(G,X)$ is non-elementary all follow from the $G$-equivariant quasi-isometry $\Gamma(G, Y\sqcup E_G(h)) \rightarrow \Gamma(G,X)$ given by fixing vertices and unifying duplicate copies of edges wherever necessary.
\end{proof}

The following is the first part of \cite[Lem. 5.5]{Hull} which, together with Proposition \ref{coboundedness}, yields a useful lemma:

\begin{lem}\label{L5}
Let Let $G$ be a group and $X \subseteq G$ be a generating set of $G$ such that the Cayley graph $\Gamma(G,X)$ is hyperbolic and the action of $G$ on $\Gamma(G,X)$ is acylindrical. If $H \leq G$ acts non-elementarily on $\Gamma(G,X)$, then there exists a unique maximal finite subgroup of $G$ which is normalized by $H$, denoted $K_G(H)$.
\end{lem}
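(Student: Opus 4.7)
The plan is to exploit the non-elementarity of $H$'s action on $\Gamma(G,X)$ to produce two independent loxodromic elements of $H$, and then to show that every finite subgroup of $G$ normalized by $H$ is trapped inside their joint maximal elementary closure, which will turn out to be finite. Since $G$ acts acylindrically on $\Gamma(G,X)$, so does its subgroup $H$ (the condition in Definition \ref{acyl} only restricts the count of group elements, so passing to a subgroup preserves it). Applying Theorem \ref{actions} to $H$ together with the non-elementarity assumption and Remark \ref{loxrem} yields infinitely many pairwise independent loxodromic elements of $H$; pick two of them, $h_1$ and $h_2$. Because the action of $G$ on $\Gamma(G,X)$ is acylindrical, every loxodromic element of $G$ is a WPD element, so Lemma \ref{L1} applies to $h_1$ and $h_2$ and furnishes the virtually cyclic maximal subgroups $E_G(h_1)$ and $E_G(h_2)$.

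The key technical step is to show that any finite subgroup $F \leq G$ normalized by $H$ satisfies $F \subseteq E_G(h_1) \cap E_G(h_2)$. Since $F$ is finite, so is $\mathrm{Aut}(F)$, and the conjugation homomorphism $H \to \mathrm{Aut}(F)$ has kernel of finite index in $H$. Hence for $i=1,2$ there exists an integer $m_i \geq 1$ with $h_i^{m_i} \in C_G(F)$, so $f h_i^{m_i} f^{-1} = h_i^{m_i}$ for every $f \in F$, and Lemma \ref{L1}(ii) forces $f \in E_G(h_i)$. On the other hand, $E_G(h_1) \cap E_G(h_2)$ is a subgroup of the virtually cyclic group $E_G(h_1)$, and if it contained an element $k$ of infinite order then a positive power of $k$ would be both a non-trivial power of $h_1$ and a non-trivial power of $h_2$ (using that $\langle h_i \rangle$ has finite index in $E_G(h_i)$), forcing $\{h_1^{\pm\infty}\} = \{h_2^{\pm\infty}\}$ on $\partial \Gamma(G,X)$ and contradicting the independence of $h_1, h_2$. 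Thus $E_G(h_1) \cap E_G(h_2)$ is finite.

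To conclude, I would observe that the family of finite $H$-normalized subgroups of $G$ is closed under joins: if $F_1, F_2$ are finite and normalized by $H$, then $\langle F_1 \cup F_2 \rangle$ is again normalized by $H$ and is contained in $E_G(h_1) \cap E_G(h_2)$, hence is itself finite and $H$-normalized. Therefore the subgroup generated by all finite $H$-normalized subgroups of $G$ is itself finite and normalized by $H$, and by construction it contains every other such subgroup; this is the desired $K_G(H)$, and uniqueness is built into the definition. The main obstacle is the first step: securing two independent loxodromic elements of $H$ with WPD inside $G$; once they are in hand, the remainder is a short packaging argument using Lemma \ref{L1} and elementary facts about virtually cyclic groups.
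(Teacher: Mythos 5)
Your argument is correct. Note that the paper does not prove this lemma itself --- it is imported verbatim as the first part of \cite[Lem. 5.5]{Hull} --- and your proof is essentially the standard argument given there: restrict the acylindrical action to $H$, extract two independent loxodromic (hence WPD) elements $h_1,h_2\in H$ via Theorem \ref{actions}, use finiteness of $\mathrm{Aut}(F)$ to put a power of each $h_i$ in $C_G(F)$ so that Lemma \ref{L1}(ii) traps every finite $H$-normalized $F$ inside the finite group $E_G(h_1)\cap E_G(h_2)$, and take the join. All the individual steps (acylindricity passing to subgroups, $\langle h_i\rangle$ having finite index in $E_G(h_i)$, and the finiteness of the intersection via independence) check out.
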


Combining this result with Proposition \ref{coboundedness}, we can obtain $K_G(H)$ for any non-elementary subgroup $H$.

\begin{lem}\label{uniquenormalized}
Suppose that $G$ is an acylindrically hyperbolic group and $H$ is a non-elementary subgroup of $G$. Then there exists a unique maximal finite subgroup of $G$ which is normalized by $H$, denoted $K_G(H)$.
\end{lem}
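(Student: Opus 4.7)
The plan is to produce a single generating set of $G$ satisfying the hypotheses of Lemma \ref{L5} and then invoke that lemma directly. First I would unpack the assumption that $H$ is non-elementary in the sense of Definition \ref{non-elem} to extract an acylindrical action of $G$ on some hyperbolic space $S$ whose restriction to $H$ is non-elementary. Restricting an acylindrical action to a subgroup preserves acylindricity with the same constants, so Theorem \ref{actions} applied to the action of $H$ on $S$, together with Remark \ref{loxrem}, forces $H$ to be non-virtually-cyclic and to contain at least one element that is loxodromic with respect to $S$. Since the ambient action of $G$ on $S$ is acylindrical, every loxodromic element of $G$ is automatically WPD, so $H \cap \mathcal{L}_{WPD}(G,S) \neq \emptyset$.

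With that intersection nonempty and $H$ not virtually cyclic, the hypotheses of Proposition \ref{coboundedness} are met, and it produces a generating set $X$ of $G$ such that $\Gamma(G,X)$ is hyperbolic, the action of $G$ on $\Gamma(G,X)$ is acylindrical, and the action of $H$ on $\Gamma(G,X)$ is non-elementary. This is exactly the setup required by Lemma \ref{L5}, which then supplies a unique maximal finite subgroup of $G$ normalized by $H$. I would adopt $K_G(H)$ as the notation for this subgroup.

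I do not expect any serious obstacle: Proposition \ref{coboundedness} was engineered precisely to bridge the gap between the abstract notion of non-elementarity from Definition \ref{non-elem} (where the witnessing action can vary) and the rigid input demanded by Lemma \ref{L5} (a single Cayley-graph action that is simultaneously acylindrical for $G$ and non-elementary for $H$). The only subtle point worth flagging in the write-up is that the conclusion of Lemma \ref{L5} --- ``unique maximal finite subgroup of $G$ normalized by $H$'' --- is purely group-theoretic, so although existence is verified through the particular generating set $X$, the resulting $K_G(H)$ depends only on $G$ and $H$, not on the choice of $X$.
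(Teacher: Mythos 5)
Your argument is correct and follows the same route as the paper: use Remark \ref{loxrem} (via the acylindricity of the restricted action) to see that $H$ is not virtually cyclic and meets $\mathcal{L}_{WPD}(G,S)$, then apply Proposition \ref{coboundedness} to produce the generating set $X$, and finish with Lemma \ref{L5}. Your extra remarks --- that restriction preserves acylindricity, that loxodromic elements are WPD for acylindrical actions, and that the resulting $K_G(H)$ is independent of the choice of $X$ because the conclusion is purely group-theoretic --- are all accurate and simply make explicit what the paper leaves implicit.
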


\begin{proof}
Since $G$ acts acylindrically on a hyperbolic space $S$ such that the action of $H$ on $S$ is non-elementary, we have by Remark \ref{loxrem} that $H$ is not virtually cyclic and $H \cap \mathcal{L}_{WPD}(G,S) \neq \emptyset$. So by Proposition \ref{coboundedness}, there exists generating set $X$ of $G$ such that the Cayley graph $\Gamma(G, X)$ is hyperbolic, the action of $G$ on $\Gamma(G, X)$ is acylindrical, and the action of $H$ on $\Gamma(G, X)$ is non-elementary. Then, by Lemma \ref{L5}, there exists a unique maximal finite subgroup $K_G(H)$ of $G$ which is normalized by $H$.
\end{proof}


\subsection{Proof of Theorem \ref{main}} \label{big proof}


We begin by proving a technical lemma:

\begin{lem}\label{tech}
Let $G$ be a group, $\{ H_{\lambda} \}_{\lambda \in \Lambda}$ a collection of subgroups of $G$, $\mathcal{H} = \bigsqcup_{\lambda \in \Lambda} H_\lambda$, and $X \subseteq G$. Suppose that $\{ H_{\lambda} \}_{\lambda \in \Lambda} \hookrightarrow_h (G,X)$, $n \geq 2$, $g_1, g_2, \ldots, g_n \in X$, and $h_1, h_2, \ldots, h_n \in H_\lambda$ for some $\lambda \in \Lambda$ such that $g_1 h_1 g_2 h_2 \ldots g_n h_n =1$. If for each $i$ we have $\widehat{d}_{\lambda}(h_i,1)>2Cn$, where $\widehat{d}_{\lambda}$ is defined as in Definition \ref{dhat} and $C$ is the constant given by Lemma \ref{ngonC}, then there exists $j \in \{1, \ldots, n \}$ such that $g_j \in H_\lambda$.
\end{lem}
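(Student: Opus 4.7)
The plan is to argue by contradiction. Suppose that $g_j \notin H_\lambda$ for every $j$ and that $\widehat{d}_\lambda(h_i,1) > 2Cn$ for every $i$, and consider the $2n$-gon $P$ in $\Gamma(G, X \sqcup \mathcal{H})$ whose consecutive sides are the single edges labeled $g_1, h_1, \ldots, g_n, h_n$ (each of length one, hence a geodesic). Because no $g_j$ lies in $H_\lambda$, the $h_i$-edges are exactly the $H_\lambda$-components of $P$. First I would observe that no single $h_i$ can be isolated in $P$: a direct application of Lemma \ref{ngonC} would give $\widehat{d}_\lambda(h_i,1) \leq 2Cn$, contradicting the hypothesis. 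So every $h_i$ is connected in $P$ to some other $H_\lambda$-component.

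Next I would pick a pair $(i,j)$ with $i < j$ such that $h_i$ and $h_j$ are connected and $j - i$ is as small as possible. Writing $v_{2i-1}$ for the initial vertex of $h_i$ and $v_{2i+1}$ for that of $h_{i+1}$, connectedness of $h_i$ and $h_{i+1}$ would force $v_{2i-1}^{-1} v_{2i+1} = h_i g_{i+1} \in H_\lambda$, hence $g_{i+1} \in H_\lambda$, a contradiction. Thus $n' := j - i \geq 2$.

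Now set $k := v_{2i-1}^{-1} v_{2j-1} \in H_\lambda$ and form a sub-polygon $P'$ by attaching the $H_\lambda$-edge labeled $k^{-1}$ from $v_{2j-1}$ back to $v_{2i-1}$. The sub-polygon $P'$ has $2n' + 1$ sides: $h_i, g_{i+1}, h_{i+1}, \ldots, g_{j-1}, h_{j-1}, g_j, k^{-1}$. Its $H_\lambda$-components are $(h_{i+1}), \ldots, (h_{j-1})$ together with the single combined component $(k^{-1} h_i)$, which arises because $k^{-1}$ and $h_i$ are cyclically adjacent $H_\lambda$-edges in $P'$. The crucial point is that the minimality of $j - i$ forces all these components to be pairwise non-connected: any connection between $h_l$ and $h_m$ with $i < l < m < j$ would yield a connected pair in $P$ of gap $m - l < j - i$, and a connection between $h_l$ and $(k^{-1} h_i)$ would give $v_{2l-1} H_\lambda = v_{2i-1} H_\lambda$, i.e.\ a connected pair $(i,l)$ of gap $l - i < j - i$.

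Finally, since $n' \geq 2$ there exists an index $l$ with $i < l < j$. Applying Lemma \ref{ngonC} to the $(2n'+1)$-gon $P'$ at the isolated component $(h_l)$ then yields $\widehat{d}_\lambda(h_l, 1) \leq C(2n'+1) \leq C(2n-1) < 2Cn$, contradicting $\widehat{d}_\lambda(h_l,1) > 2Cn$. The main delicate point in this approach is verifying the isolatedness claim for every $H_\lambda$-component of $P'$: this rests entirely on the minimality of $j - i$ together with the correct identification of the combined cyclic component $(k^{-1} h_i)$, and it is what allows the contradiction to be extracted in a single step rather than via an induction on $n$.
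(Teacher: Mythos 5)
Your argument is correct and is essentially the paper's proof in contrapositive form: both take a connected pair of $H_\lambda$-components with minimal gap, adjoin an $H_\lambda$-labelled edge to close off a sub-polygon, and play the minimality of the gap off against Lemma \ref{ngonC} applied to an intermediate component. The only (harmless) difference is that you measure the gap only in the forward cyclic direction rather than minimizing over both directions, which still keeps the sub-polygon to at most $2n-1$ sides, so your bound $C(2n'+1)\leq C(2n-1)<2Cn$ goes through.
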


\begin{proof}
Since $g_1 h_1 g_2 h_2 \ldots g_n h_n =1$, there exists a geodesic $2n$-gon $$p=a_1 b_1 a_2 b_2 \ldots a_n b_n$$ in the Cayley graph $\Gamma (G, X \sqcup \mathcal{H})$, where each side $a_i$ is an edge labeled by $g_i \in X$ and each side $b_i$ is an edge labeled by $h_i \in H_\lambda$ so that the closed loop $p$ corresponds to the relation $g_1 h_1 g_2 h_2 \ldots g_n h_n=1$. Then for each $i$, the side $b_i$ is an $H_{\lambda}$-component of $p$.

It suffices to show that there exists $j \in \{1, \ldots, n \}$ such that the two $H_{\lambda}$-components adjacent to $a_j$ are connected. If this is the case, then there exists an edge of $\Gamma (G, X \sqcup \mathcal{H})$ with label in $H_\lambda$ which connects $a_{j-}$ and $a_{j+}$ so that $g_j \in H_\lambda$,  finishing the proof.

Now for each $i$, we have $\widehat{d}_{\lambda}(h_i,1)>2Cn$, so Lemma \ref{ngonC} implies that none of the components $b_1, \ldots, b_n$ are isolated in $p$. Let $b_r$ and $b_s$ be connected $H_{\lambda}$-components such that the number, $N$, of sides of $p$ between $b_{r+}$ and $b_{s-}$ is minimal. If $N=1$, then choose $a_j$ to be $a_s$, where $b_{r+}=a_{s-}$ and $b_{s-}=a_{s+}$.

Otherwise, there exists $b_t$ which is an edge on the subpath of $p$ (or cyclic shift of $p$) between $b_{r+}$ and $b_{s-}$. Since $b_r$ and $b_s$ are connected, there exists an edge $e$ of $\Gamma (G, X \sqcup \mathcal{H})$ with label in $H_\lambda$ which connects $b_{r+}$ and $b_{s-}$.  Let $p'$ be the geodesic $(N+1)$-gon formed by $e$ and the sides of $p$ between $b_{r+}$ and $b_{s-}$. Then $N+1<2n$, so $$\widehat{d}_{\lambda}(h_t,1)> 2Cn > C(N+1) $$ so that by Lemma \ref{ngonC}, $b_t$ is not an isolated $H_{\lambda}$-component of $p'$.

\begin{figure}
\centering
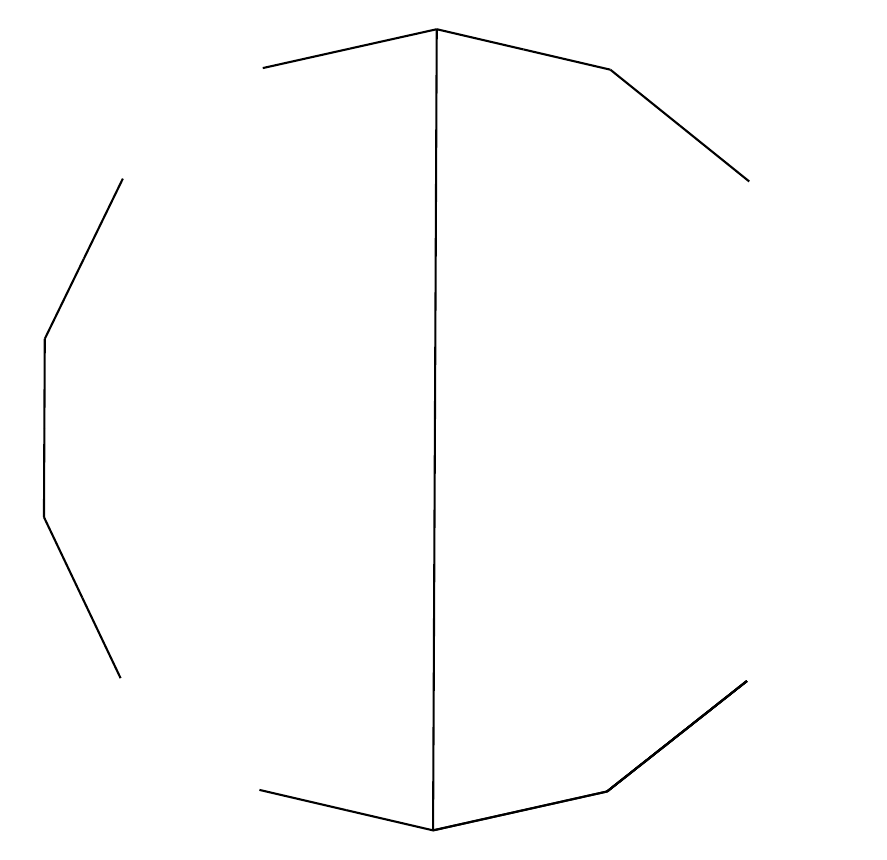
\caption{The $2n$-gon $p$ with additional edge $e$}
\end{figure}

If $b_t$ is connected to any other $H_{\lambda}$-component of $p'$, then clearly $N$ was not minimal, and if $b_t$ is connected to $e$, then it is connected to both $b_r$ and $b_s$, so again $N$ was not minimal. Hence it must be that $N=1$, finishing the proof.

\end{proof}

We will also need two more results for the main proof. Recall that two elements $g,h$ of a group $G$ are called \textit{commensurable} if there exist $x \in G$ and $k,l \in \mathbb{Z} \backslash \{0 \}$ such that $g^k = xh^l x^{-1}$ and called \textit{non-commensurable} otherwise.

\begin{lem}\label{T3}
(\cite[Thm. 6.8]{DGO}) Let $G$ be a group acting on a hyperbolic space $S$. If $h_1, \ldots, h_k$ are pairwise non-commensurable loxodromic WPD elements with respect to the action of $G$ on $S$ then there exists a set $X \subseteq G$ such that $\{E_G(h_1), \ldots, E_G(h_k)\} \hookrightarrow_h (G,X)$.
\end{lem}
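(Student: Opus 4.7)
The plan is to bootstrap from the single-subgroup case in Lemma \ref{loxhyp} and exploit non-commensurability to assemble the individual embeddings into a joint one. I would argue by induction on $k$: the base case $k=1$ is precisely Lemma \ref{loxhyp}, so the content lies in extending a joint embedding $\{E_G(h_1),\ldots,E_G(h_{k-1})\}\hookrightarrow_h (G,X')$ by $E_G(h_k)$.

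The key algebraic input is the finite intersection property: for $i\neq j$ and any $g\in G$, $E_G(h_i)\cap gE_G(h_j)g^{-1}$ is finite. If not, then since both factors are virtually cyclic, this intersection contains an element $x$ of infinite order. A nonzero power of $x$ lies in $\langle h_i\rangle$ (since $\langle h_i\rangle$ has finite index in $E_G(h_i)$), hence $x$ is itself a loxodromic WPD element, so by the uniqueness in Lemma \ref{L1} we have $E_G(h_i)=E_G(x)=gE_G(h_j)g^{-1}$. Inside this common virtually cyclic group, $h_i$ and $gh_jg^{-1}$ are both of infinite order, so $h_i^n=gh_j^mg^{-1}$ for some nonzero $n,m$, contradicting non-commensurability.

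For the construction, apply Lemma \ref{loxhyp} to each $h_i$ to obtain $Y_i\subseteq G$ with $E_G(h_i)\hookrightarrow_h (G,Y_i)$, and set $X=\bigcup_i Y_i$. To verify $\{E_G(h_1),\ldots,E_G(h_k)\}\hookrightarrow_h (G,X)$, one must check (a) hyperbolicity of $\Gamma(G,X\sqcup\bigsqcup_i E_G(h_i))$, and (b) properness of each relative metric $\widehat{d}_\lambda$. Property (b) should follow from the finite intersection property together with properness of $\widehat{d}_\lambda$ in the individual embedding $(G,Y_\lambda)$: any short $\lambda$-admissible path in the joint graph can be compared with one in the single-subgroup Cayley graph by grouping edges lying in a common coset of some other $E_G(h_j)$, and the finite-intersection bound controls the ambiguity in this grouping.

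The main obstacle is (a), the hyperbolicity of the joint relative Cayley graph. My approach would be to upgrade the isolated-component estimate of Lemma \ref{ngonC} from each individual embedding to the joint one via a polygon-decomposition argument: an $E_G(h_i)$-component of a geodesic polygon cannot be ``short-circuited'' by a path through $E_G(h_j)$-components for $j\neq i$ without producing an infinite-order element in some $E_G(h_i)\cap gE_G(h_j)g^{-1}$, which the finite intersection property prohibits. Making this quantitative and deriving a uniform thinness constant for all geodesic triangles is the technical core of \cite[Thm.~6.8]{DGO}, and in practice would likely require the full projection-complex construction used there rather than a short combinatorial estimate.
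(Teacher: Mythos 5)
The paper offers no proof of Lemma \ref{T3}: it is imported verbatim as \cite[Thm.~6.8]{DGO}, so there is no internal argument to compare against. Judged on its own terms, your sketch correctly isolates the one genuinely algebraic input, namely that non-commensurability forces $E_G(h_i)\cap gE_G(h_j)g^{-1}$ to be finite for $i\neq j$. (Your detour through $E_G(x)$ is unnecessary and slightly delicate, since it requires knowing that $x$ is loxodromic WPD before $E_G(x)$ is defined; it is cleaner to note that an infinite-order $x$ in the intersection has powers $x^{a}=h_i^{b}$ and $x^{c}=gh_j^{d}g^{-1}$ with $a,b,c,d\neq 0$, whence $h_i^{bc}=gh_j^{ad}g^{-1}$ directly contradicts non-commensurability.)

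However, the proposal does not amount to a proof, and you say as much. The entire content of the lemma is the verification that the collection $\{E_G(h_i)\}$ satisfies conditions (b) and (c) of the definition of a hyperbolic embedding --- hyperbolicity of the joint relative Cayley graph and properness of each $\widehat{d}_i$ --- and on both points you offer only heuristics. The choice $X=\bigcup_i Y_i$ is unmotivated, and the properness argument is circular in flavor: a $\lambda$-admissible path in the joint graph may use edges labelled by elements of $E_G(h_j)$ for $j\neq\lambda$, which need not lie in $Y_\lambda$, so there is no direct comparison between $\widehat{d}_\lambda$ computed in $\Gamma(G,X\sqcup(E_G(h_1)\sqcup\cdots\sqcup E_G(h_k)))$ and the metric from the single-subgroup embedding; the proposed ``grouping of edges in a common coset'' controlled by finite intersections is a restatement of what must be proved, not an argument. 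Likewise your plan for hyperbolicity explicitly defers to the projection-complex/geometric-separation machinery of \cite{DGO}, which is exactly the theorem being invoked. So the attempt is a correct treatment of the easy algebraic step together with an honest acknowledgment that the substantive geometric step is missing; as a standalone proof it has a genuine gap precisely where the cited theorem does its work.
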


\begin{lem}\label{hlem}
(\cite[Lem. 5.6]{Hull}) Let Let $G$ be a group and $X \subseteq G$ be a generating set of $G$ such that the Cayley graph $\Gamma(G,X)$ is hyperbolic and the action of $G$ on $\Gamma(G,X)$ is acylindrical. If $H \leq G$ acts non-elementarily on $\Gamma(G,X)$, then for each $k \in \mathbb{N}$, there exist pairwise non-commensurable loxodromic elements $a_1, \ldots, a_k \in H$ such that $E_G(a_i)=\langle a_i \rangle \times K_G(H)$.
\end{lem}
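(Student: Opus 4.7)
The plan is to use Theorem \ref{actions} to harvest loxodromic elements from $H$ and then modify each one so that its maximal virtually cyclic subgroup takes the prescribed product form.

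\textbf{Step 1 (harvesting loxodromics).} The action of $H$ on $\Gamma(G,X)$ is acylindrical (inherited from the acylindrical action of $G$) and non-elementary by hypothesis, so Theorem \ref{actions} applied to $H$ produces infinitely many pairwise independent loxodromic elements of $H$; acylindricity of the ambient action then makes each of these WPD. Two commensurable loxodromic WPD elements generate conjugate maximal virtually cyclic subgroups by Lemma \ref{L1}(iii), so pigeonholing over commensurability classes (equivalently, over conjugacy classes of unordered boundary fixed-point pairs) extracts an infinite pairwise non-commensurable subcollection $h_1, h_2, \dots \in H$ of loxodromic WPD elements.

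\textbf{Step 2 (centralizing $K_G(H)$).} Since $H$ normalizes the finite group $K_G(H)$, each $h_i$ acts on $K_G(H)$ by conjugation with some finite order $N_i$. Replacing each $h_i$ by a common power divisible by all the $N_i$ yields loxodromic elements centralizing $K_G(H)$; pairwise non-commensurability is preserved because passing to powers does not change the maximal virtually cyclic subgroups. Since $h_i$ now centralizes $K_G(H)$ and both sit inside $E_G(h_i)$, we obtain $\langle h_i \rangle \times K_G(H) \leq E_G(h_i)$, a finite-index inclusion as $\langle h_i\rangle$ already has finite index in the virtually cyclic $E_G(h_i)$.

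\textbf{Step 3 (forcing equality — the main obstacle).} Promoting this finite-index containment to an equality $E_G(a_i) = \langle a_i \rangle \times K_G(H)$ is the delicate step: taking further powers of $h_i$ leaves $E_G(h_i)$ unchanged, so the modification has to be genuinely new. My plan is to replace $h_i$ by a sufficiently generic small-cancellation product built from the collection $\{h_j\}$, using the hyperbolic embeddings $E_G(h_j) \hookrightarrow_h G$ supplied by Lemma \ref{loxhyp} and the isolated-component estimate of Lemma \ref{ngonC}. A suitably generic such product $a_i \in H$ is loxodromic WPD with $E_G(a_i)$ as small as possible: its elliptic radical (the unique maximal finite normal subgroup of $E_G(a_i)$) is forced to coincide with $K_G(H)$ because it must contain $K_G(H)$ (since $a_i$ centralizes $K_G(H)$ and generic small-cancellation products admit no inversion, so $E_G(a_i)$ is a split extension of cyclic by its elliptic radical), while on the other hand it is normalized by enough of $H$ to inject into $K_G(H)$ by the maximality property from Lemma \ref{L5}. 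Pairwise non-commensurability of the $a_i$ is maintained by using disjoint families of constituents $h_j$ for different $i$. The hard part is this exact pinning-down of the elliptic radical of a generic small-cancellation combination, a standard but subtle application of the DGO hyperbolically embedded machinery.
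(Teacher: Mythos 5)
This lemma is quoted in the paper directly from \cite[Lem. 5.6]{Hull} without proof, so the only question is whether your argument actually establishes it, and it does not: the decisive step is left as a plan. Your Steps 1--2 only get you to a finite-index containment $\langle h_i\rangle \times K_G(H) \leq E_G(h_i)$, and you yourself note that passing to further powers cannot improve this since $E_G(h_i^n)=E_G(h_i)$. The entire content of the lemma is the reverse containment, i.e.\ producing elements whose maximal finite ``radical'' inside $E_G(a_i)$ is no larger than $K_G(H)$, and for that you offer only a sketch (``a suitably generic small-cancellation product\dots'') whose key sub-claims are not justified. In particular, the assertion that the maximal finite normal subgroup $F_i$ of $E_G(a_i)$ ``is normalized by enough of $H$ to inject into $K_G(H)$ by the maximality property from Lemma \ref{L5}'' does not follow: Lemma \ref{L5} only bounds finite subgroups normalized by \emph{all} of $H$, whereas $F_i$ is a priori normalized only by $E_G(a_i)$, which need not contain (or be normalized by) any substantial part of $H$. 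Pinning down $F_i = K_G(H)$ is exactly where Hull uses the hyperbolically embedded machinery of \cite{DGO} in earnest (properness of the relative metrics $\widehat d$ on the $E_G(h_j)$, isolated components of geodesic polygons), and none of that is carried out here.

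There is also a flaw in Step 1: pairwise \emph{independent} loxodromic elements need not be pairwise non-commensurable, since $g^k = xh^lx^{-1}$ forces $\{g^{\pm\infty}\} = x\{h^{\pm\infty}\}$ but places no constraint on $\{g^{\pm\infty}\}\cap\{h^{\pm\infty}\}$ (in a free group, $a$ and $bab^{-1}$ are independent yet commensurable). So an infinite independent family from Theorem \ref{actions} could a priori lie in a single commensurability class, and ``pigeonholing over commensurability classes'' extracts nothing; producing infinitely many pairwise non-commensurable loxodromics already requires a ping-pong or small-cancellation construction. Both gaps point to the same missing ingredient, which is why the statement is cited from \cite{Hull} rather than proved from the results assembled in this paper.
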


It will be helpful in the next proof (and later in the proof of Lemma \ref{abelhypemb}) to have the following definition:

\begin{defn}\label{equiv}
Let $G$ be a group and $S \subseteq G$. We say that two equations $p(x)=1$ and $q(x)=1$ are \textit{$S$-equivalent} if for each $s \in S$ we have that $p(s)=1$ if and only if $q(s)=1$.
\end{defn}

We can now prove the following result:

\begin{thm}\label{strongmain}
Suppose that $G$ is an  acylindrically hyperbolic group and $H$ is a non-elementary subgroup of $G$. Then the Zariski closure of $H$ contains $C_G(K_G(H))$.
\end{thm}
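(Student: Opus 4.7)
The plan is to prove that if $w_1(x), \ldots, w_n(x) \in G \ast \langle x \rangle$ satisfy $H \subseteq \bigcup_{j=1}^n S_j$ where $S_j = \{g \in G : w_j(g) = 1\}$, then every $c \in C_G(K_G(H))$ lies in $\bigcup_j S_j$; since every Zariski-closed set is an intersection of such finite unions, this implies the desired containment. Let $p_0$ denote the maximum length of any $w_j$. First I would use Proposition \ref{coboundedness} and Lemma \ref{hlem} to obtain, for any sufficiently large $k$, pairwise non-commensurable loxodromic elements $a_1, \ldots, a_k \in H$ with $E_G(a_i) = \langle a_i \rangle \times K_G(H)$, and then Lemma \ref{T3} to get $\{E_G(a_i)\}_{i=1}^k \hookrightarrow_h (G, Y)$ for some $Y \subseteq G$. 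Using Lemma \ref{finitehypemb} I would preemptively enlarge $Y$ to contain every coefficient of every $w_j$ along with all products of consecutive coefficients that could arise in the iteration below (a finite collection determined by the $w_j$'s); the enlarged set still yields $\hookrightarrow_h$, and I let $C$ denote the associated constant from Lemma \ref{ngonC}.

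Choosing $k > n p_0$, a two-step pigeonhole argument, first over exponents of each $a_i$ and then over $i$, produces a single $j$ and a subset $I \subseteq \{1, \ldots, k\}$ with $|I| \geq p_0 + 1$ such that $w_j(a_i^m) = 1$ for infinitely many $m$, for every $i \in I$. The heart of the proof is an iterative reduction of $w_j$. Writing $w_j$ in cyclic form $(g_p g_0) \cdot x^{n_1} \cdot g_1 \cdots g_{p-1} \cdot x^{n_p}$, for each $i \in I$ and $|m|$ sufficiently large (exploiting that $\widehat{d}_{E_G(a_i)}$ is proper) the substitution $x = a_i^m$ yields a closed geodesic $2p$-gon in $\Gamma(G, Y \sqcup \mathcal{H})$ whose $E_G(a_i)$-components all have $\widehat{d}$-length exceeding $2Cp$. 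Lemma \ref{tech} then forces some coefficient among $g_p g_0, g_1, \ldots, g_{p-1}$ into $E_G(a_i)$; since this candidate list has size $p \leq p_0 < |I|$, a further pigeonhole supplies distinct $i, i' \in I$ for which a common coefficient $g_\star$ lies in $E_G(a_i) \cap E_G(a_{i'})$. The non-commensurability of $a_i$ and $a_{i'}$ combined with the direct-product decomposition $E_G(a_r) = \langle a_r \rangle \times K_G(H)$ forces this intersection to equal $K_G(H)$, so $g_\star \in K_G(H)$. Since $c$ centralizes $K_G(H)$ by hypothesis and each $a_r$ does so as well, I can slide $g_\star$ past its neighboring $x$-powers and merge those powers to obtain a shorter word $w_j^{(1)}$ of length $p-1$ satisfying $w_j^{(1)}(c) = w_j(c)$ and $w_j^{(1)}(a_i^m) = w_j(a_i^m) = 1$ on the same set of $i, m$, then iterate.

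After at most $p_0 - 1$ iterations the word shrinks to length one, of the form $g_\ast x^N g_{\ast\ast}$. Infinitely many solutions $a_i^m$ together with the infinite order of $a_i$ force $N = 0$ and $g_\ast g_{\ast\ast} = 1$, making the reduced equation a tautology; telescoping back through the equalities $w_j(c) = w_j^{(1)}(c) = \cdots = 1$ yields $c \in S_j$. The principal difficulty is the bookkeeping of this iteration: ensuring in advance that $Y$ contains every coefficient that can appear in any intermediate word (so Lemma \ref{tech} remains applicable at each step), that $|I|$ is chosen large enough to survive the pigeonhole throughout, and that $c$'s commutation with each absorbed $K_G(H)$-element correctly propagates through the telescope. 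All of the relevant data is finite and determined by the initial $w_j$'s, but the cascade must be set up carefully at the beginning.
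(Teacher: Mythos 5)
Your proposal is correct and follows essentially the same route as the paper: the same cast of lemmas (Proposition \ref{coboundedness}, Lemma \ref{hlem}, Lemma \ref{T3}, Lemma \ref{finitehypemb}, Lemma \ref{tech}), the same pigeonhole onto a single equation admitting more than $\max\{m_i\}$ of the pairwise non-commensurable loxodromics as solutions, and the same reduction step absorbing a coefficient lying in $E_G(a_s)\cap E_G(a_t)=K_G(H)$ into an adjacent one. The paper merely packages your explicit iteration as a minimal-length element of a set $Q$ of $C_G(K_G(H))$-equivalent words (handling the coefficient bookkeeping via the sets $Y_{j,N}$) and fixes a single large exponent $\gamma$ rather than using infinitely many exponents per $a_i$; these differences are cosmetic.
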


\begin{proof}
Let $V$ be the Zariski closure of $H$. Then $$V = \cap_{\lambda \in \Lambda} V_\lambda$$ where $\Lambda$ is some indexing set and each $V_\lambda$ is a union of finitely many primitive solution sets. We want to show that $C_G(K_G(H)) \subseteq V_\lambda$ for each $\lambda \in \Lambda$. So let $p_1(x), \ldots, p_l(x)$ be elements of $G \ast \langle x \rangle$ such that $$H \subseteq V_\lambda = \bigcup_{i = 1} ^l \{ g \in G \, | \, p_i(g)=1 \}.$$
Without loss of generality, we may assume that for each $i$, $p_i(x)$ is cyclically reduced. Furthermore, we may assume (by taking a cyclic permutation of $p(x)$ if necessary) that $p_i(x)$ is of the form $$p_i(x)=g_{1,i} x^{\alpha_{1,i}} g_{2,i} x^{\alpha_{2,i}} \ldots g_{m_i,i} x^{\alpha_{m_i,i}}$$ where $g_{1,i}, \ldots, g_{m_i,i} \in G$ and $\alpha_{1,i}, \ldots, \alpha_{m_i,i} \in \mathbb{Z}$, and if $m_i \geq 2$, then $g_{1,i}, \ldots, g_{m_i,i} \neq 1$ and $\alpha_{1,i}, \ldots, \alpha_{m_i,i} \neq 0$.

Set
\begin{equation}\label{keqn}
k=l  \cdot (\max \{m_1, \ldots, m_l \}+1).
\end{equation}
By assumption, $G$ acts acylindrically on a hyperbolic space $S$ such that the action of $H$ on $S$ is non-elementary. By Remark \ref{loxrem}, $H$ is not virtually cyclic and $H \cap \mathcal{L}_{WPD}(G,S) \neq \emptyset$, so by Proposition \ref{coboundedness}, there exists generating set $W$ of $G$ such that the Cayley graph $\Gamma(G, W)$ is hyperbolic, the action of $G$ on $\Gamma(G, W)$ is acylindrical, and the action of $H$ on $\Gamma(G, W)$ is non-elementary. Then, by Lemma \ref{hlem}, there exist pairwise non-commensurable elements $a_1, \ldots, a_k \in H$ which are loxodromic with respect to the action of $G$ on $\Gamma(G,W)$ such that $E_G(a_i)=\langle a_i \rangle \times K_G(H)$. In particular, we have that $$a_1, \ldots, a_k \in C_G(K_G(H))$$ and, since the elements $a_1, \ldots, a_k$ are pairwise non-commensurable,
\begin{equation}\label{intersect}
E_G(a_i) \cap E_G(a_j) = K_G(H)
\end{equation}
whenever $i \neq j$.

By Lemma \ref{T3}, there exists a generating set $X$ of $G$ such that $$\{E_G(a_1), \ldots, E_G(a_k) \} \hookrightarrow_h (G,X).$$

For each $i \in \{1, \ldots, l \}$ and each $n \leq m_i$, let $Z_i = \{ g_{1,i}, \ldots, g_{m_i,i} \}$ and let
\begin{equation}\nonumber
Y_{i,n}= \{g \in \langle Z_i \rangle \, | \, |g|_{Z_i} \leq 2^{m_i-n}\}.
\end{equation}

I.e., $Y_{i,n}$ is the set of all elements of $G$ which can be expressed as products of $2^{m_i - n}$ or fewer (not necessarily distinct) elements of $\{ g_{1,i}, \ldots, g_{m_i,i} \}$ and their inverses. Let $$Y = \cup_{i=1} ^l Y_{i,1}.$$ Since $Y$ is finite, we may assume by Lemma \ref{finitehypemb} that $X$ contains $Y$.

For each $i \in \{1, \ldots, k\}$, let $\widehat{d}_i$ denote the relative metric in the Cayley graph $$\Gamma(G,X \sqcup (E_G(a_1) \sqcup \ldots \sqcup E_G(a_k))$$ which is induced by the hyperbolic embedding $\{E_G(a_1), \ldots, E_G(a_k) \} \hookrightarrow_h (G,X)$ and corresponds to $E_G(a_i)$ (see Definition \ref{dhat}). Since for each $i \in \{ 1, \ldots, k \}$, the element $a_i$ has infinite order and the metric space $(E_G(a_i), \widehat{d}_i)$ is proper, we may choose $\gamma \in \mathbb{N}$ large enough such that for any $\beta \in \mathbb{Z} \backslash \{0\}$ and any $i \in \{ 1, \ldots, k \}$,  $$\widehat{d}_i((a_i ^{\gamma})^{\beta},1)>2C\max \{m_1, \ldots, m_l \}$$ where $C$ is the constant given by Lemma \ref{ngonC}. By (\ref{keqn}), there exists $j \in \{1, \ldots, l\}$ such that at least $m_j+1$ elements of the set $\{a_1 ^\gamma, \ldots, a_k ^\gamma \}$ satisfy the equation $p_j(x)=1$. Without loss of generality, the elements $a_1 ^\gamma, \ldots, a_{m_j+1} ^\gamma$ satisfy $p_j(x)=1$.

Let $Q$ be the subset of all $q(x) \in G \ast \langle x \rangle$ of the form $$q(x)=h_{1} x^{\beta_{1}} h_{2} x^{\beta_{2}} \ldots h_{N} x^{\beta_{N}}$$ for some $N \leq m_j$ and $\beta_{1}, \ldots, \beta_{N} \in \mathbb{Z}$ such that
\begin{itemize}
\item[(a)] $h_1, h_2, \ldots, h_{N} \in Y_{j,N}$.
\item[(b)] $q(x)=1$ is $C_G(K_G(H))$-equivalent to $p_j(x)=1$ (see Definition \ref{equiv}). 
\end{itemize}
Certainly $Q$ is non-empty since $p_j(x)$ satisfies conditions (a), and (b). Let $q(x)$ be an element of $Q$ whose corresponding value of $N$ is minimal. We claim that $N=1$ for $q(x)$.

Assuming that this is not the case, let $q(x)=h_1 x^{\beta_1} h_2 x^{\beta_2} \ldots h_N x^{\beta_N}$ for some $N \geq 2$ where $h_1, \ldots, h_N \in Y_{j,N}$ and $\beta_1, \ldots, \beta_N \in \mathbb{Z}$. It follows that since $N \geq 2$ is minimal, we have that $h_{1}, \ldots, h_{N} \neq 1$ and $\beta_{1}, \ldots, \beta_{N} \neq 0$, since otherwise, there is a cyclic reduction of $q(x)$ which is strictly shorter, violating the minimality of $N$. For each $i \in \{ 1, \ldots, m_j+1 \}$, we have that $a_i ^{\gamma} \in C_G(K_G(H)) \cap H$ is a solution to the equation $p_j(x)=1$ and hence by (c) to the equation $q(x)=1$. Since $$q(a_i ^{\gamma})=h_1 (a_i ^{\gamma})^{\beta_1} h_2 (a_i ^{\gamma})^{\beta_2} \ldots h_N (a_i ^{\gamma})^{\beta_N}=1,$$ we have by Lemma \ref{tech} that there exists $r_i \in \{1, \ldots, N \}$ for which $h_{r_i} \in E_G(a_i)$. Since this is true for each $i \in \{ 1, \ldots, m_j+1 \}$ and since $N \leq m_j$, there exist $s,t \in \{ 1, \ldots, m_j+1 \}$ with $s \neq t$ such that $r_s=r_t$ so that $h_{r_s} \in E_G(a_s) \cap E_G(a_t) = K_G(H)$ (see (\ref{intersect})).

Without loss of generality, we may assume that $r_s = 2$ (otherwise, take a cyclic permutation of $q(x)$). Then, since $h_2 \in K_G(H)$, the equation $q(x)=1$ (and hence $p_j(x)=1$) is $C_G(K_G(H))$-equivalent to $q'(x)=1$ where $$q'(x)=(h_1 h_2) x^{\beta_1 + \beta_2} \ldots h_N x^{\beta_N}.$$
Furthermore, since $h_1, h_2, h_3, \ldots, h_N \in Y_{j,N}$, we have that $(h_1 h_2), h_3, \ldots, h_N \in Y_{j,N-1}$. Thus we have an element $q'(x) \in Q$ which contradicts our choice of $q(x)$ (namely the minimality of $N$). Therefore, $N=1$ and so $q(x)= h_1x^{\beta_1}$ for some $h_1 \in G$ and $\beta_1 \in \mathbb{Z}$.

It must also be that $\beta_1 = 0$, since otherwise $q(a_1 ^\gamma)=q(a_2 ^\gamma)=1$ implies that there exists a non-zero integer $\beta_1$ such that $a_1 ^{\gamma \beta_1} = a_2 ^{\gamma \beta_1}$, showing that $a_1$ and $a_2$ are commensurable. Therefore $q(x)=h_1 \in G$ and hence it must be that $h_1=1$. The equation $q(x)=1$ is trivially satisfied by all elements of $C_G(K_G(H))$, and therefore each $c \in C_G(K_G(H))$ also satisfies the equation $p_j(x)=1$ by (c). Thus we obtain that $$C_G(K_G(H)) \subseteq V_\lambda = \bigcup_{i = 1} ^l \{ g \in G \, | \, p_i(g)=1 \}$$ as desired.
\end{proof}

We can now prove Theorem \ref{main}:

\begin{proof}[Proof of Theorem \ref{main}]
To show the forward implication, observe that $C_G(K_G(H)) \leq H$ by Theorem \ref{strongmain} and that $H \leq N_G(K_G(H))$ by the definition of $K_G(H)$.

To show the reverse implication, observe that $C_G(K)$ is algebraic by Example \ref{centalg} and that since $C_G(K) \leq H \leq N_G(K)$ and $K$ is finite, it follows that $C_G(K)$ is of finite index in $H$. Then, by Corollary \ref{finindalg}, $H$ is algebraic.
\end{proof}


\section{Applications}\label{appl}


\subsection{Free products}

\begin{proof}[Proof of Corollary \ref{freecor}]
Let $T$ be the Bass-Serre tree corresponding to the graph of groups with vertex groups $A$ and $B$ and trivial edge group. It is easy to see that the natural action of $A \ast B$ on $T$ is acylindrical. (This is a particular case of \cite[Lem. 5.2]{MO}.) By Theorem \ref{main}, either the action of $H$ on $T$ is elementary, or $H$ acts non-elementarily and there exists a finite subgroup of $F \leq A \ast B$ such that $C_{A \ast B}(F) \leq H \leq N_{A \ast B}(F)$.

Before addressing each case, it will be helpful to recall the Kurosh Subgroup Theorem \cite{Kur}, which states that any subgroup $K$ of $A \ast B$ decomposes as a free product
\begin{equation}\label{kurosh}
K = F(X) \ast (\ast_{i \in I} g_i ^{-1} A_i g_i) \ast (\ast_{j \in J} h_j ^{-1} B_j h_j)
\end{equation}
where $X \subseteq G$ freely generates a subgroup of $G$, for all $i \in I$ we have $g_i \in A \ast B$ and $A_i \leq A$, and for all $j \in J$ we have $h_j \in A \ast B$ and $B_j \leq B$.

Assume first that $H$ acts elementarily on $T$. Then by Theorem \ref{actions}, either $H$ is virtually cyclic and contains a loxodromic element or $H$ acts on $T$ with bounded orbits. If $H$ is virtually cyclic, then since $H$ decomposes as a free product as in (\ref{kurosh}), it must be that either $$H \cong \mathbb{Z}_2 \ast \mathbb{Z}_2 \cong D_\infty$$ or $H$ is equal to just a single factor of the free product in (\ref{kurosh}). In the latter case, either $H=F(X)$ (in which case $H$ is cyclic), or $H$ is conjugate to a subgroup of either $A$ or $B$ (which is not possible if $H$ contains a loxodromic element since conjugates of $A$ and $B$ are vertex stabilizers of $T$ and hence act with bounded orbits).

If $H$ acts with bounded orbits, then by \cite[Ch. 2, Cor. 2.8]{BH}, $H$ fixes a point in $T$ and hence a vertex. Since vertex stabilizers of $T$ are conjugates of $A$ and $B$, $H$ is conjugate to a subgroup of either $A$ or $B$.

Now assume that $H$ acts non-elementarily on $T$ and there exists a finite subgroup of $F \leq A \ast B$ such that $C_{A \ast B}(F) \leq H \leq N_{A \ast B}(F)$. If $F$ is trivial, then $H = A \ast B$. If $F$ is non-trivial, then $F$ decomposes as in (\ref{kurosh}), and in particular, $F$ must be equal to some non-trivial $g_i ^{-1} A_i g_i$ or $h_j ^{-1} B_j h_j$. Without loss of generality, assume $F = g_i ^{-1} A_i g_i \neq \{1\}$ for some $A_i \leq A$. We claim that in this case, $H \leq g_i ^{-1} A g_i$.


Let $f \in F \backslash \{1\}$. Since $H \leq N_{A \ast B}(F)$ and $F$ fixes the vertex $g_i ^{-1} A$ of $T$, we have for each $h \in H$ that
$$h^{-1}fh(g_i ^{-1} A)=g_i ^{-1} A$$ and thus $$f(hg_i ^{-1} A)=hg_i ^{-1} A$$ so that $f$ fixes the vertex $hg_i ^{-1} A$ of $T$.

Now observe that each non-trivial element $g \in G \backslash \{1\}$ fixes at most one vertex of $T$. Indeed, if $g \in G$ fixes two vertices, then the unique path between those vertices is also fixed by $g$, so $g$ stabilizes an edge and is therefore trivial. Since $f$ is non-trivial, we must therefore have that $hg_i ^{-1} A = g_i ^{-1} A$ so that $h$ fixes the vertex $g_i ^{-1} A$ and is thus an element of $g_i ^{-1} A g_i$. So $H \leq g_i ^{-1} A g_i$ as desired.
\end{proof}

\subsection{Torsion-free relatively hyperbolic groups}

In this subsection, we prove Corollary \ref{TFRHcor}, but to do so, we will need the two lemmas below:

\begin{lem}\label{algabel}
Let $A$ be an abelian group and let $S$ be an algebraic subset of $A$. Suppose that there exists an infinite-order element $h \in A$ such that $| \langle h \rangle \cap S | = \infty$. Then $S=A$.
\end{lem}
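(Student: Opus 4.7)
The plan is to exploit the abelian structure to get a very explicit classification of primitive solution sets over $A$. Given any word $w(x) = g_1 x^{\alpha_1} g_2 x^{\alpha_2} \cdots g_n x^{\alpha_n} \in A \ast \langle x \rangle$, commutativity of $A$ lets me collect letters upon evaluation: for every $g \in A$,
\[
w(g) = g_1 g_2 \cdots g_n \cdot g^{\alpha_1 + \alpha_2 + \cdots + \alpha_n} = c \cdot g^N,
\]
where $c := g_1 \cdots g_n \in A$ and $N := \sum_i \alpha_i \in \mathbb{Z}$. So the primitive solution set of $w(x)=1$ falls into exactly one of three types: (i) all of $A$ (if $N = 0$ and $c = 1$), (ii) the empty set (if $N = 0$ and $c \neq 1$), or (iii) the set of $N$-th roots of $c^{-1}$ in $A$ (if $N \neq 0$).

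The next step is to check that a type (iii) primitive solution set meets $\langle h \rangle$ in at most one element. Indeed, if $h^k$ is an $N$-th root of $c^{-1}$, then $h^{kN} = c^{-1}$, and since $h$ has infinite order and $N \neq 0$, this equation determines $k$ uniquely (if any solution exists). Hence any primitive solution set meeting $\langle h \rangle$ in infinitely many elements must be of type (i), i.e.\ equal to $A$ itself.

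Finally I combine this with the definition of algebraic: write $S = \bigcap_{i \in I} S_i$ where each $S_i$ is a finite union of primitive solution sets $P_{i,1}, \ldots, P_{i,m_i}$. Since $\langle h \rangle \cap S \subseteq \langle h \rangle \cap S_i$ is infinite, and
\[
\langle h \rangle \cap S_i = \bigcup_{j=1}^{m_i} \bigl(\langle h \rangle \cap P_{i,j}\bigr),
\]
the pigeonhole principle forces $\langle h \rangle \cap P_{i,j}$ to be infinite for some $j$. By the previous paragraph, that $P_{i,j}$ must equal $A$, which forces $S_i = A$. This holds for every $i \in I$, so $S = A$, as required.

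There is no real obstacle: once the abelian hypothesis reduces each primitive solution set to a coset of the subgroup $\{g \in A : g^N = 1\}$ (together with the degenerate cases $\emptyset$ and $A$), everything follows from the torsion-freeness of $\langle h \rangle$ and a finite-union pigeonhole. The only mildly subtle point is remembering to reduce to a fixed normal form $c \cdot g^N$ before analyzing the solution set, rather than trying to argue directly about the free-product word $w(x)$.
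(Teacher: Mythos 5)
Your proposal is correct and follows essentially the same route as the paper: both reduce each primitive solution set over the abelian group to the solution set of an equation of the form $c\,x^{N}=1$, observe that when $N\neq 0$ such a set meets the infinite cyclic group $\langle h\rangle$ in at most one element (the paper phrases this as: two distinct solutions $h^{k},h^{l}$ force $N=0$), and conclude by pigeonhole over the finite union that some primitive piece, and hence each $S_i$ and thus $S$, equals $A$. The only difference is expository — your explicit three-type classification and named pigeonhole step are spelled out slightly more fully than in the paper.
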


\begin{proof}
It suffices to prove the statement in the case where $S$ is the union of finitely many primitive solution sets, since if $$S = \cap_{i \in I} S_i$$ where $I$ is some indexing set and each $S_i$ is a union of finitely many primitive solution sets, then we will have shown for each $i \in I$ that $S_i=A$, so indeed $S=A$. So let $p_1(x), \ldots, p_n(x)$ be elements of $A \ast \langle x \rangle$ such that $$S = \bigcup_{i = 1} ^n \{ a \in A \, | \, p_i(a)=1 \}.$$ Furthermore, since $A$ is abelian, we may assume that for each $i \in \{1, \ldots, n\}$, the equation $p_i(x)=1$ is of the form $a_i x^{m_i}=1$ for some $a_i \in A$ and $m_i \in \mathbb{Z}$.

Since $| \langle h \rangle \cap S | = \infty$, there exists $i \in \{1, \ldots, n\}$ and $k,l \in \mathbb{N}$ with $k \neq l$ such that $a_i(h^{k})^{m_i}=1$ and $a_i(h^{l})^{m_i}=1$. Then $h^{k m_i}=h^{l m_i}$, and since $h$ has infinite order, it must be that $m_i=0$. But then $a_i=1$ and the equation $p_i(x)=1$ is trivially satisfied by all elements of $A$ so that $S=A$ as desired.
\end{proof}

\begin{cor}
All proper algebraic subgroups of abelian groups are torsion.
\end{cor}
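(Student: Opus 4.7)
The plan is to deduce this immediately from Lemma \ref{algabel} by contraposition. Assume $H$ is an algebraic subgroup of an abelian group $A$ that is not torsion, i.e.\ contains some element $h$ of infinite order. I would observe that since $H$ is a subgroup, $\langle h \rangle \subseteq H$, so
\[
|\langle h \rangle \cap H| = |\langle h \rangle| = \infty.
\]
Since an algebraic subgroup is in particular an algebraic subset, Lemma \ref{algabel} applied to $S = H$ then yields $H = A$, so $H$ is not proper. Contrapositively, every proper algebraic subgroup of $A$ is torsion.

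There is essentially no obstacle here: the only thing to notice is that the lemma's hypothesis on $S$ is automatic as soon as $H$ is a \emph{subgroup} containing an infinite-order element, because then $\langle h \rangle$ lies entirely inside $S = H$. Everything else was already done in the lemma.
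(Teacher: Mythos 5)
Your argument is correct and is essentially the same as the paper's: both apply Lemma \ref{algabel} to $S=H$ using an infinite-order element $h\in H$, noting that $\langle h\rangle\subseteq H$ makes the lemma's hypothesis automatic.
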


\begin{proof}
If $H\leq A$ is an algebraic subgroup, then for each $h \in H$, either $h$ has finite order or we may apply Lemma \ref{algabel} to $h$ to conclude that $H=A$.
\end{proof}

\begin{lem}\label{abelhypemb}
Suppose that $G$ is a group, $A$ is an abelian hyperbolically embedded subgroup of $G$, $S$ is an algebraic subset of $G$, $H$ is a subgroup of $G$ contained in $S$, and there exists $g \in G$ such that $H \leq g^{-1}Ag$. Then either $H$ is torsion or $g^{-1}Ag \subseteq S$.
\end{lem}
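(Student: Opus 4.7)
Plan. First, since left and right multiplication by any fixed element of $G$ are Zariski homeomorphisms (Remark \ref{multhomeom}), conjugation is too, so $gSg^{-1}$ is algebraic. Replacing $H$ by $gHg^{-1}$ and $S$ by $gSg^{-1}$, one may assume $g=1$: so $H \leq A$, $H \leq S$, and the goal becomes to show $A \subseteq S$ whenever $H$ is not torsion. Assume $H$ is not torsion and fix $h \in H$ of infinite order. Write $S = \bigcap_{\lambda \in \Lambda} S_\lambda$, where each $S_\lambda$ is a finite union of primitive solution sets; it suffices to show $A \subseteq S_\lambda$ for each $\lambda$. Fixing $\lambda$ and writing $S_\lambda = \bigcup_{i=1}^{l} \{x \in G \,|\, p_i(x)=1\}$, the pigeonhole principle applied to $\langle h\rangle \subseteq S_\lambda$ produces some $p := p_i$ whose primitive solution set contains infinitely many powers of $h$. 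Put $p(x) = g_1 x^{\alpha_1} \cdots g_n x^{\alpha_n}$ in cyclically reduced form.

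Next, the plan is to mimic the reduction technique from the proof of Theorem \ref{strongmain} in order to construct an equation $q(x) = 1$ with coefficients in $A$ that is $A$-equivalent (Definition \ref{equiv}) to $p(x) = 1$. Since $A \hookrightarrow_h G$, choose $X \subseteq G$ with $A \hookrightarrow_h (G,X)$, and enlarge $X$ via Lemma \ref{finitehypemb} so that it contains every product of up to $2^{n-1}$ elements of $\{g_1, \ldots, g_n\}$ (a finite set). Let $C$ be the constant from Lemma \ref{ngonC} for this embedding. Because $(A, \widehat{d}_A)$ is proper and $h$ has infinite order, all but finitely many powers $h^k$ satisfy $\widehat{d}_A(h^{k\alpha_j}, 1) > 2Cn$ for every $j$; in particular some such $h^k$ is a solution of $p(x)=1$. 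Applying Lemma \ref{tech} to the relation $p(h^k)=1$ forces some $g_j \in A$. Since $A$ is abelian, $g_j$ commutes with every element of $A$, so when $p$ is evaluated at any element of $A$, $g_j$ can be moved past an adjacent power of $x$ and combined with a neighboring coefficient (after possibly a cyclic permutation, which preserves the solution set). This yields a strictly shorter equation $p'(x)=1$ which is $A$-equivalent to $p(x)=1$. Iterating (and updating $n$ and the chosen solution $h^k$ at each step), the number of coefficients strictly decreases until all coefficients lie in $A$, producing the desired $q$.

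Finally, the solution set of $q(x)=1$ in $A$ is an algebraic subset of the abelian group $A$ containing infinitely many powers of the infinite-order element $h$; Lemma \ref{algabel} therefore yields that $q(x)=1$ is satisfied by every element of $A$. By $A$-equivalence, so is $p(x)=1$, so $A \subseteq S_\lambda$. Ranging over $\lambda$ gives $A \subseteq S$, completing the proof. The main obstacle will be the bookkeeping for the iterated reduction, in particular tracking the complexity of the intermediate coefficients as words in the original $g_1,\ldots,g_n$ (to keep them inside $X$ so that Lemma \ref{tech} remains applicable) and handling the cyclic shifts needed to absorb a coefficient from $A$; termination itself is automatic since the length strictly decreases at every step.
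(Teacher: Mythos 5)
Your proposal is correct and follows essentially the same route as the paper: reduce to $g=1$ and to a finite union of primitive solution sets, enlarge the generating set of the hyperbolic embedding to contain the relevant products of coefficients, use properness of $(A,\widehat{d}_A)$ together with Lemma \ref{tech} to locate a coefficient in $A$, absorb it into a neighbour using commutativity of $A$, iterate, and finish with Lemma \ref{algabel}. The only (harmless) imprecision is at termination: the iteration stops when the equation has a single coefficient (Lemma \ref{tech} needs $n\ge 2$), and one then checks directly that this last coefficient lies in $A$ via $h_1(h^{k})^{\beta_1}=1$ with $h\in A$, exactly as the paper does with its minimal-$N$ formulation.
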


\begin{proof}
Since the Zariski topology is invariant under conjugation by Remark \ref{multhomeom} and since conjugation preserves the orders of group elements, we may assume that $g=1$. As in the proof of Lemma \ref{algabel}, it suffices to prove the lemma in the case when $S$ is the union of finitely many primitive solution sets.

Assume that $H$ is not torsion. The majority of the proof below is similar to the proof of Theorem \ref{strongmain}.

Let $p_1(x), \ldots, p_l(x) \in G \ast \langle x \rangle$ such that $$H \subseteq S = \bigcup_{i = 1} ^l \{ k \in G \, | \, p_i(k)=1 \}.$$ We may assume as in the the proof of Theorem \ref{strongmain} that $p_i(x)$ is of the form $$p_i(x)=g_{1,i} x^{\alpha_{1,i}} g_{2,i} x^{\alpha_{2,i}} \ldots g_{m_i,i} x^{\alpha_{m_i,i}}$$ where $g_{1,i}, \ldots, g_{m_i,i} \in G$ and $\alpha_{1,i}, \ldots, \alpha_{m_i,i} \in \mathbb{Z}$, and if $m_i \geq 2$, then $g_{1,i}, \ldots, g_{m_i,i} \neq 1$ and $\alpha_{1,i}, \ldots, \alpha_{m_i,i} \neq 0$. For each $i \in \{1, \ldots, l \}$ and each $n \leq m_i$, define $Y_{i,n}$ as in the proof of Theorem \ref{strongmain}. Then $A \hookrightarrow_h (G,X)$ for some $X \subseteq G$, and we may assume by Lemma \ref{finitehypemb} that $X$ contains $\cup_{i=1} ^l Y_{i,1}$

Let $\widehat{d}_A$ denote the relative metric in the Cayley graph $\Gamma(G,X \sqcup A)$ induced by the hyperbolic embedding $A \hookrightarrow_h (G,X)$. Since $H$ contains some element $h$ of infinite order and the metric space $(A,\widehat{d}_A)$ is proper, there exists a strictly ascending sequence of natural numbers $\gamma_1, \gamma_2, \gamma_3, \ldots$ such that for any $\beta \in \mathbb{Z} \backslash \{0\}$ and any $i \in \mathbb{N}$, $$\widehat{d}_A((h^{\gamma_i})^{\beta},1)>2C\max \{m_1, \ldots, m_l \}$$ where $C$ is the constant given by Lemma \ref{ngonC}. Since for each $i \in \mathbb{N}$ we have that $h^{\gamma_i} \in H \subseteq S$, there exists $j \in \{1, \ldots, l\}$ and a subsequence of $\{\gamma_i\}_{i \in \mathbb{N}}$ whose corresponding $h^{\gamma_i}$ satisfy the equation $p_j(x)=1$. Without loss of generality, take $\{\gamma_i\}_{i \in \mathbb{N}}$ to be this subsequence.

Let $Q$ be the subset of all $q(x) \in G \ast \langle x \rangle$ of the form $$q(x)=h_{1} x^{\beta_{1}} h_{2} x^{\beta_{2}} \ldots h_{N} x^{\beta_{N}}$$ for some $N \leq m_j$ and $\beta_{1}, \ldots, \beta_{N} \in \mathbb{Z}$ such that
\begin{itemize}
\item[(a)] $h_1, h_2, \ldots, h_{N} \in Y_{j,N}$.
\item[(b)] $q(x)=1$ is $A$-equivalent to $p_j(x)=1$ (see Definition \ref{equiv}). 
\end{itemize}
Certainly $Q$ is non-empty since $p_j(x)$ satisfies conditions (a) and (b). Let $q(x)$ be an element of $Q$ whose corresponding value of $N$ is minimal.

Since $$\widehat{d}_A((h^{\gamma_1})^{\beta},1)>2C\max \{m_1, \ldots, m_l \},$$ we may argue as in the proof of Theorem \ref{strongmain} that if $N \neq 1$ for $q(x)$, then there exists $r \in \{1, \ldots, N\}$ for which $h_r$ is in $A$. Without loss of generality, $r=2$. Letting $$q'(x)=(h_1 h_2) x^{\beta_1 + \beta_2} \ldots h_N x^{\beta_N},$$ we see that since $A$ is abelian, $q'(x)$ is in $Q$, which contradicts the minimality of $N$. Thus it must be that $N=1$ for $q(x)$.

Since $N=1$, we have $h_1(h^{\gamma_1})^{\beta_1}=1$ so that $h_1 \in A$. Then $q(x)=1$ is an equation with coefficients in $A$ which is $A$-equivalent to $p_j(x)=1$, so $\{ k \in G \, | \, p_j(k)=1 \} \cap A$ is an algebraic subset of $A$. Since $h \in A$ has infinite order and $h^{\gamma_i} \in \{ k \in G \, | \, p_j(k)=1 \} \cap A$ for each $i \in \mathbb{N}$, we have that $$|\langle h \rangle \cap \{ k \in G \, | \, p_j(k)=1 \} \cap A| = \infty,$$ so $\{ k \in G \, | \, p_j(k)=1 \} \cap A = A$ by Lemma \ref{algabel}. Thus $A \subseteq S$ as desired.
\end{proof}

We can now prove Corollary \ref{TFRHcor}:

\begin{proof}[Proof of Corollary \ref{TFRHcor}]
Let $\mathcal{H}= \bigsqcup_{\lambda \in \Lambda} H_\lambda$. By \cite[Thm. 5.2]{Osin}, for any finite relative generating set $X$ of $G$, the action of $G$ on $\Gamma(G,X \sqcup \mathcal{H})$ is acylindrical. If $H$ is non-elementary, then $G$ admits a non-elementary acylindrical action on a hyperbolic space and is thus acylindrically hyperbolic, so $H=G$ by Theorem \ref{main}. Otherwise, the action of $H$ on $\Gamma(G,X \sqcup \mathcal{H})$ is elementary. If this is the case, then by Theorem \ref{actions}, $H$ is either elliptic or virtually cyclic (and hence cyclic).

So now let $H$ be elliptic. We want to show that $H \leq g^{-1}H_\lambda g$ for some $g \in G, \lambda \in \Lambda$. Since $G$ is torsion-free, we have by \cite[Thm. 1.14]{ORel} that every element of $G$ which is not conjugate to an element of $H_\lambda$ for some $\lambda \in \Lambda$ acts on $\Gamma(G,X \sqcup \mathcal{H})$ with unbounded orbits. Hence there exists $a \in (H \cap H_\lambda) \backslash \{1\}$ for some $\lambda \in \Lambda$ (unless $H = \{1\}$, in which case we are done). It suffices to show that every $b \in H$ belongs to the same $H_\lambda$.

If there exists $b \in H \backslash H_\lambda$, then by Lemma \ref{finitehypemb} we may assume that $b \in X$ so that $d_{X \sqcup \mathcal{H}}(1,b)=1$. By \cite[Lem. 4.4]{OsinElRel}, there exists a finite subset $F_\lambda \subseteq H_\lambda$ such that if $h \in H_\lambda \backslash F_\lambda$, $b \in G \backslash H_\lambda$, and $d_{X \sqcup \mathcal{H}}(1,b)=1$, then $bh$ acts on $\Gamma(G,X \sqcup \mathcal{H})$ with unbounded orbits. Since $a$ is non-trivial and thus of infinite order, we have that for large enough $n$, $a^n \not\in F_\lambda$ so that $ba^n$ acts on $\Gamma(G,X \sqcup \mathcal{H})$ with unbounded orbits. This contradicts the assumption that $H$ is elliptic. This completes the proof of the first statement of Corollary \ref{TFRHcor}.

To prove the second statement of the corollary, assume that $H$ is conjugate to a subgroup of some abelian $H_\lambda$. Then $H_\lambda \hookrightarrow_h G$, and since $H$ is torsion-free, we have by Lemma \ref{abelhypemb} that $H=\{1\}$ or $H$ is conjugate to $H_\lambda$.
\end{proof}

\subsection{Ascending chains of algebraic subgroups}

In general, ascending chains of algebraic subgroups need not stabilize, even in acylindrically hyperbolic groups. For example, consider the following construction:

\begin{ex}\label{ACCex}
Consider a group with presentation $\langle X | R \rangle$ that contains an infinite ascending chain of subgroups $$K_1 \leq K_2 \leq \ldots$$ where for each $i \in \mathbb{N}$, $K_i \neq K_{i+1}$. Let $$H = \langle X,a, t_1, t_2, \ldots \, | \, R, [X,a]=1, [K_1,t_1]=1, [K_2, t_2]=1, \ldots \rangle.$$ Then $G=H \ast \mathbb{Z}$ is acylindrically hyperbolic with non-elementary acylindrical action on the Bass-Serre tree corresponding to the graph of groups with vertex groups $H$ and $\mathbb{Z}$ and trivial edge group. Furthermore, for each $i \in \mathbb{N}$, $$K_i = \{x \in H \ast \mathbb{Z} \, | \, [x,t_i]=1 \} \cap \{x \in H \ast \mathbb{Z} \, | \, [x,a]=1 \}$$ so that $K_1 \leq K_2 \leq \ldots$ is an ascending chain of algebraic subgroups of $G$ that does not stabilize.
\end{ex}

However, ascending chains of algebraic subgroups in acylindrically hyperbolic groups are subject to the property given by Corollary \ref{ACCcor}, whose proof is given below:

\begin{proof}[Proof of Corollary \ref{ACCcor}]
Assume that condition $(a)$ fails. Then there exists a hyperbolic space $S$ such that $G$ acts on $S$ acylindrically and, by Theorem \ref{actions}, $\cup_{i \in \mathbb{N}} H_i$ contains an element $h$ which is loxodromic with respect to this action. Hence there exists some $I \in \mathbb{N}$ such that $h \in H_I$. Then exactly one of two things is true: either $\langle h \rangle \leq H_i \leq E_G(h)$ for all $i \geq I$, where $\langle h \rangle$ is a finite index subgroup of $E_G(h)$ (in which case the chain clearly stabilizes), or there exists $I' \geq I$ such that $H_i$ acts non-elementarily on $S$ for all $i \geq I'$.

In the latter case, we have by Theorem \ref{main} that for each $i \geq I'$ there exists a finite subgroup $K_i \leq G$ such that $$C_G(K_i) \leq H_i \leq N_G(K_i),$$ and in particular, we saw in the proof of Theorem \ref{main} that $K_i =K_G(H_i)$. So in fact we have that $$C_G(K_G(H_i)) \leq H_i \leq N_G(K_G(H_i)),$$ where $C_G(K_G(H_i))$ is a finite index subgroup of $N_G(K_G(H_i))$ since $K_G(H_i)$ is finite. Moreover, by definition, $K_G(H_{i+1}) \leq K_G(H_i)$ for all $i \geq I'$, and since $K_G(H_{I'})$ is finite, the sequence $\{ K_G(H_i)\}_{i \geq I'}$ stabilizes. Hence there exists $I'' \geq I'$ such that $$C_G(K_G(H_{I''})) \leq H_i \leq N_G(K_G(H_{I''}))$$ for all $i \geq I''$, where $C_G(K_G(H_{I''}))$ is a finite index subgroup of $N_G(K_G(H_{I''}))$, so the chain $H_1 \leq H_2 \leq H_3 \leq \ldots$ must stabilize.
\end{proof}

It should be noted that the group $G$ in Example \ref{ACCex} has an infinite ascending chain of algebraic subgroups because its free factor $H$ also has such a chain. Is it the case then, that for relatively hyperbolic groups, all examples of infinite ascending chains of algebraic subgroups arise in a similar manner? In other words,

\begin{question} Let $\{H_\lambda\}_{\lambda \in \Lambda}$ be a collection of groups with the property that for each \linebreak $\lambda \in \Lambda$, every ascending chain of algebraic subgroups in $H_\lambda$ stabilizes, and let $G$ be a relatively hyperbolic group with peripheral subgroups $\{H_\lambda\}_{\lambda \in \Lambda}$. Is it true that every chain of algebraic subgroups of $G$ stabilizes?
\end{question}

One possible route towards a positive answer is to try to prove that
\begin{itemize}
\item[(a)] If $H$ is a subgroup of $G$ acting eliptically on a relative Cayley graph corresponding to the collection of subgroups $\{H_\lambda\}_{\lambda \in \Lambda}$, then either $A$ is finite or $A$ is conjugate into $H_\lambda$ for some $\lambda \in \Lambda$, and
\item[(b)]
if $A$ is an algebraic subgroup of $G$ such that $A\leq H_\lambda$ for some $\lambda \in \Lambda$, then $A$ is algebraic in $H_\lambda$. 
\end{itemize}
These items, together with Corollary \ref{ACCcor}, would show that every chain stabilizes.

\vspace{1cm}

\noindent \textbf{Bryan Jacobson: } Department of Mathematics, Vanderbilt University, Nashville, TN 37240, U.S.A.\\
E-mail: \emph{bryan.j.jacobson@vanderbilt.edu}


\begin{thebibliography}{99}
 \bibitem{AG1}
G. Baumslag, A. Myasnikov, and V. Remeslennikov, Algebraic geometry over groups I. Algebraic sets and ideal theory, \textit{J. Algebra} \textbf{219} (1999), no. 1, 16–79.

 \bibitem{BF}
M. Bestvina,  K. Fujiwara,  Bounded cohomology of subgroups of mapping class groups,
\textit{Geom. Topol.} \textbf{6} (2002), 69-89.

 \bibitem{BH}
M. Bridson, A. Haefliger, Metric Spaces of Non-Positive Curvature, Grundlehren Math. Wiss. \textbf{319}, Springer, Berlin, 1999. 

 \bibitem{Bryant}
R. Bryant, The verbal topology of a group, \textit{J. Algebra} \textbf{48} (1977), no. 2, 340-346.

 \bibitem{Dahmani}
F. Dahmani, Combination of convergence groups, \textit{Geom. Topol.} \textbf{7} (2003), 933-963.

 \bibitem{DGO}
F. Dahmani, V. Guirardel, D. Osin, Hyperbolically embedded subgroups and rotating families in groups acting on hyperbolic spaces, \textit{Memoirs Amer. Math. Soc.}, \textbf{245} (2017), no. 1156.

 \bibitem{Farb}
B. Farb, Relatively hyperbolic groups, \textit{Geom. Funct. Anal.} \textbf{8} (1998), 810-840.

 \bibitem{Hull}
M. Hull,
Small cancellation in acylindrically hyperbolic groups, arXiv:1308.4345, to appear in \textit{Groups Geom. Dyn.}

 \bibitem{KM}
O. Kharlampovich and A. Myasnikov, Definable sets in a hyperbolic group, \textit{Internat. J. of Algebra and Comput.} \textbf{23} (2013), no. 1, 91-110.

\bibitem{Kur}
A. G. Kurosh, Die Untergruppen der freien Produkte von beliebigen Gruppen. Mathematische Annalen, \textbf{109} (1934), 647-660 (in German).

 \bibitem{top}
A. Klyachko, A. Olshanskii, D. Osin, On topologizable and non-topologizable groups, \textit{Topology Appl.} \textbf{160} (2013), no. 16, 2104-2120.

 \bibitem{Markov}
A. A. Markov, On unconditionally closed sets, \textit{Comptes Rendus Dokl. AN SSSR} (N.S.) \textbf{44} (1944), 180-181 (in
Russian).

 \bibitem{MO}
A. Minasyan, D. Osin, Acylindrical hyperbolicity of groups acting on trees, \textit{Math. Ann.} \textbf{362}
(2015), no. 3-4, 1055-1105.
 
 \bibitem{Osin}
D. Osin, Acylindrically hyperbolic groups, \textit{Trans. Amer. Math. Soc.}, \textbf{368} (2016), no. 2, 851-888.

 \bibitem{OsinElRel}
D. Osin, Elementary subgroups of relatively hyperbolic groups and bounded generation, \textit{Internat. J. Algebra Comput.} \textbf{16} (2006), no. 1, 99-118.

  \bibitem{OsinL2}
D. Osin, On acylindrical hyperbolicity of groups with positive first $\ell^2$-Betti number, \textit{Bull. London Math. Soc.} \textbf{47} (2015), no. 5, 725-730.

 \bibitem{ORel}
D. Osin, Relatively hyperbolic groups: Intrinsic geometry, algebraic properties, and algorithmic problems, \textit{Memoirs Amer. Math. Soc.} \textbf{179} (2006), no. 843.

 \bibitem{S}
Z. Sela, Diophantine geometry over groups VII. The elementary theory of a hyperbolic group, \textit{Proc. London Math. Soc.} \textbf{99} (2009), no. 1, 217-273.

\end{thebibliography}
\end{document}